\newcommand {\SX} {{\mathbb X}}
\newcommand {\A} {{\mathbb A}}
\newcommand {\SH} {{\mathbb H}}
\newcommand{\one}{\mathbf{1}}
\newcommand{\xx}{\mathbf{x}}
\def\supp{\mathop{\rm supp}}
\numberwithin{equation}{section}
\newtheorem{theorem}{Theorem}[section]
\newtheorem{lemma}[theorem]{Lemma}
\newtheorem{Remark}[theorem]{Remark}
\newtheorem{definition}[theorem]{Definition}
\newtheorem{example}[theorem]{Example}
	\title{Greedy algorithms: a review and open problems}
\date{}
\author{Andrea Garc\'ia (Universidad San Pablo-CEU and CUNEF Universidad)}
\begin{document}

	\maketitle
	
	
	
	\begin{abstract}Greedy algorithms are a fundamental category of algorithms in mathematics and computer science, characterized by their iterative, locally optimal decision-making approach, which aims to find global optima. In this review, we will discuss two greedy algorithms. First, we will talk about the so-called Relaxed Greedy Algorithm in the context of dictionaries in Hilbert spaces analyzing the optimality of the definition of this algorithm and, next, we give a general overview of the Thresholding Greedy Algorithm and the Chebyshev Thresholding Greedy Algorithm with regard to bases in $p$-Banach spaces with $0<p\leq 1$. In both cases, we pose some questions for future research.
	\end{abstract}

	\textbf{Mathematics Subject Classification:} 41A65, 41A46, 46B15.
	
	\textbf{Keywords:}\,{greedy algorithm; greedy bases} 
	
	

	\section{Introduction and Background}
	For over twenty years, Greedy Approximation Theory has been one of the most important areas in the field of Non-linear Approximation Theory. Within the field, there are many researchers working on and analyzing greedy-type algorithms, but one of the leading figures who has been investigating greedy algorithms for over 20 years is Vladimir Temlyakov (see \cite{T2011}). The greedy algorithms introduced by V. N. Temlyakov have a significant impact on approximation theory and on the understanding of signal and data compression. 
	
	In general, greedy algorithms are a family of approximation algorithms that make decisions at each step based on the choice that seems best at that moment, with the hope of finding a globally optimal solution. Although these algorithms do not always guarantee the optimal solution in all problems, they are especially valuable in scenarios where finding an exact optimal solution is computationally intractable. Some fields where these algorithms appear are the following:
	
	\begin{itemize}
		\item Greedy algorithms in the field of Hilbert and Banach spaces with regard to dictionaries are an important technique in data and signal compression (see \cite{BT}).
		\item Greedy algorithms regarding to bases in Banach spaces aim to build approximations by iteratively selecting the “most significant” coefficients respect to this basis. This process is particularly relevant in the study of sparse approximations, where one seeks to represent a function or signal with as few basis elements as possible while maintaining a certain level of accuracy (see \cite{Tsparse}).
	\end{itemize}
	
	In this review, we will analyze two different algorithms. The first one, is the so called ``Relaxed Greedy Algorithm" $(G_m^r)_{m\in\mathbb N}$ where, working with a dictionary in a Hilbert space, for $m\geq 2$,
	$$G_m^r(f):=\left(1-\dfrac{1}{m}\right)G_{m-1}^r(f)+\dfrac{1}{m}g(R_{m-1}^r(f)),$$
	and
	$$R_m^r(f)=R_m^r(f,\mathcal D):=f-G_m^r(f),$$
	where $g=g(f)\in\mathcal D$ be an element which maximized $\langle f,g\rangle$. For $m=0$, $R_0(f):=f$ and $G_0(f):=0$ and, for $m=1$, we need to take the elements given by the so called "Pure Greedy Algorithm" (see Section \ref{sec3} for the precise definition).  Using this algorithm, in \cite{DT} we can find that
	$$\Vert f-G_m^r(f)\Vert\leq \dfrac{2}{\sqrt{m}},\; m=1,2,....,$$
	for every $f\in A_1(\mathcal D)$, where $A_1(\mathcal D)$ is the class of functions $f=\sum_{n\in A}a_k g_k$ with $g_k\in\mathcal D$, $\vert A\vert<\infty$ and $\sum_{n\in A}\vert a_n\vert\leq 1$.
	Given this result, it is natural to ask whether it is possible to improve the previous inequality and go from  $m^{-1/2}$  to a better exponent. After analyzing this algorithm to answer this question, we will redefine the algorithm in the following sense:
	$$G_m^r(f):=\left(1-\dfrac{1}{m^\alpha}\right) G_{m-1}^r(f)+\dfrac{1}{m^{\alpha}}g(R_{m-1}^r(f)).$$
	Under this new extension, we prove that the inequality proved by Temlyakov for the Relaxed Greedy Algorithm is the optimal one here, that is, the optimal exponent $\alpha$ is $1$.
	
	The second algorithm that we study is the Thresholding Greedy Algorithm (TGA) $(\mathcal G_m)_{m\in\mathbb N}$ in general $p$-Banach (or quasi-Banach) spaces with regard to bases. This algorithm was introduced around 25 years ago by S. V. Konyagin and V. N. Temlyakov in \cite{KT} for Banach spaces and studied in \cite{AABW} in the context of quasi-Banach spaces: given a semi-normalized basis $\mathcal B=(\xx_n)_{n\in\mathbb N}$ in a quasi-Banach space $\SX$, if $f=\sum_{n}a_n(f)\xx_n$, the TGA selects the largest coefficients of $f$, that is,
	$$\mathcal G_m(f):=\sum_{n\in G}a_n(f)\xx_n,$$
	where $\vert G\vert=m$ and
	$$\min_{n\in G}\vert a_n(f)\vert\geq \max_{n\not\in G}\vert a_n(f)\vert.$$
	In Section \ref{sec4} we give a general review about different greedy-like bases to analyze the convergence of the TGA, but we will focus our attention and we will pose some open questions regarding the relation between almost-greedy and semi-greedy bases. Moreover, if we say that a basis is almost-greedy if the TGA produces the best approximation by projections, that is,
	$$\Vert f-\mathcal G_m(f)\Vert \approx \inf_{\vert A\vert\leq m}\left\Vert f-\sum_{n\in A}a_n(f)\xx_n\right\Vert,\; \forall m\in\mathbb N, \forall f\in\SX,$$
	we will prove that this condition is equivalent to work only with elements $f\in\A$ where $\A$ is
	$$\A:=\lbrace f\in\SX : \vert a_n(f)\vert\neq \vert a_j(f)\vert\; \forall n\neq j, n,j\in\supp(f)\rbrace.$$ 
	
	Throughout this article, the symbol $a_j\lesssim b_j$ means that there is a positive constant $C$ such that $a_j\leq C\, b_j$ for $j\in J$ and we say that $(a_j)_{j\in J}$ and $(b_j)_{j\in J}$ are equivalent, and we write $a_j \approx b_j$, when $a_j \lesssim b_j$ and $b_j\lesssim a_j$. If $A$ is a finite subset of $\mathbb N$, $\vert A\vert$ is the cardinality of $A$. Also, we define the following set
	$$\mathcal E_A:=\left\lbrace \boldsymbol{\varepsilon}=(\varepsilon_n)_{n\in A} : \vert\delta_n\vert=1\; \forall n\in A\right\rbrace,$$
	and
	$$\one_{\boldsymbol{\varepsilon} A}[\mathcal B,\SX]=\one_{\boldsymbol{\varepsilon} A}:=\sum_{n\in A}\varepsilon_n\xx_n,$$
	where $\one_{\boldsymbol{\varepsilon} A}$ are the so-called indicator sums. When $\boldsymbol{\varepsilon}\equiv \boldsymbol{1}=(1)_{n\in A}$, we use the notation $\one_A$.

	\section{Bases and dictionaries in quasi-Banach spaces}
	
	To introduce and analyze the different greedy algorithms, we need to fix some concepts. One the one hand, a \textbf{quasi-Banach space} is a complete quasi-normed vector space over $\mathbb F=\mathbb R$ or $\mathbb C$, where a quasi-norm on a vector space \(\SX\) is a function \(\|\cdot\|: \SX \to \mathbb{R}^+\) that satisfies the following properties:
	
	\begin{enumerate}
		\item[Q1)] \(\|x\| = 0 \iff x = 0\).
		\item[Q2)] \(\|\alpha x\| = |\alpha| \|x\|\) for all $\alpha\in\mathbb F$ and $x\in\SX$.
		\item[Q3)] There exists a constant \(C \geq 1\) such that \(\|x + y\| \leq C (\|x\| + \|y\|)\) for all $x,y\in\SX$.
	\end{enumerate}
	
	If \(C = 1\), the quasi-norm \(\|\cdot\|\) is a norm, and the quasi-Banach space is called a \textbf{Banach space}. A typical example of a quasi-Banach space is \(\ell_p\),  \(0 < p < \infty\), where this space contains the sequences \(x = (x_n)\) such that \(\sum_{n=1}^{\infty} |x_n|^p < \infty\) and the quasi-norm is \(\|x\|_p = \left( \sum_{n=1}^{\infty} |x_n|^p \right)^{1/p}\). For $p\geq 1$, $\ell_p$ is a Banach space.

	On the other hand, a \textbf{\(p\)-Banach space}  is a generalization of a Banach space where the norm satisfies a modified version of the triangle inequality. For \(0 < p \leq 1\), a \(p\)-norm on a vector space \(\SX\) is a function \(\|\cdot\|: \SX \to \mathbb{R}^+\) that satisfies the following properties for all \(x, y \in \SX\) and all scalars \(\alpha \in \mathbb{R}\) or $\mathbb C$:
	
	\begin{enumerate}
		\item[P1)] \(\|x\| = 0 \iff x = 0\).
		\item[P2)] \(\|\alpha x\| = |\alpha| \|x\|\).
		\item[P3)] \(\|x + y\|^p \leq \|x\|^p + \|y\|^p\).
	\end{enumerate}
	
	If \(p = 1\), the \(p\)-norm is a norm, and the \(p\)-Banach space is a Banach space. Also, since P3) implies Q3), a $p$-Banach space is a quasi-Banach space. Thanks to the Aoki-Rolewicz Theorem (see \cite{Aoki,Rolewicz}), any quasi-Banach space $\SX$ is $p$-convex, that is,
	$$\left\Vert \sum_{j=1}^n x_j\right\Vert\leq C\left(\sum_{j=1}^n \Vert x_j\Vert^p\right)^{1/p}, n\in\mathbb N, x_j\in \SX.$$
	This way, $\SX$ becomes $p$-Banach under a suitable renorming, for some $0<p\leq 1$. 
	
	Despite the fact that a quasi-norm is not necessarily a continuous map, the reverse triangle law
	\begin{eqnarray}
		\vert\;\Vert f\Vert^p -\Vert g\Vert^p\;\vert\leq \Vert f-g\Vert^p,\; f,g\in \SX,
	\end{eqnarray} 
	implies that a $p$-norm is continuous. Hence, the Aoki-Rolewicz Theorem implies that any quasi-Banach space can be equipped with a continuous quasi-norm.

	A particular case of interest of Banach spaces is a Hilbert space. A \textbf{Hilbert space} is a complete inner product space, that is, a vector space \(\mathbb H\) equipped with an inner product \(\langle \cdot, \cdot \rangle: \SH \times \SH \to \mathbb{R}\) or \(\mathbb{C}\) that satisfies the following properties for all \(x, y, z \in \SH\) and all scalars \(\alpha \in \mathbb{R}\) or \(\mathbb{C}\):
	
	\begin{enumerate}
		\item \(\langle x, x \rangle \geq 0\) and \(\langle x, x \rangle = 0 \iff x = 0\).
		\item \(\langle x, y \rangle = \overline{\langle y, x \rangle}\).
		\item \(\langle \alpha x, y \rangle = \alpha \langle x, y \rangle\).
		\item \(\langle x + y, z \rangle = \langle x, z \rangle + \langle y, z \rangle\).
	\end{enumerate}
	
	The norm induced by the inner product is defined as \(\|x\| = \sqrt{\langle x, x \rangle}\). A Hilbert space is an inner product space that is complete with respect to this norm. Some typical examples are the following ones:
	
	\begin{itemize}
		\item \text{\(\ell_2\) space:} the space of square-summable sequences \(x = (x_n)\) such that \(\sum_{n=1}^{\infty} |x_n|^2 < \infty\) is a Hilbert space with the inner product \(\langle x, y \rangle = \sum_{n=1}^{\infty} x_n \overline{y_n}\).
		\item \text{\(L^2\) space:} the space of square-integrable functions \(f: [a,b] \to \mathbb{R}\) (or \(\mathbb{C}\)) such that \(\int_a^b |f(x)|^2 \, dx < \infty\) is a Hilbert space with the inner product \(\langle f, g \rangle = \int_a^b f(x) \overline{g(x)} \, dx\).
	\end{itemize}
	
	In the field of Functional Analysis, Banach spaces play a fundamental role due to their complete and normed structure, making them a natural framework for studying a wide range of mathematical problems. A key concept within these spaces is the notion of a basis, which provides a powerful tool for representing and analyzing the elements of the space. Therefore, we will introduce two concepts of a basis that will be of great utility.

	A \textbf{Schauder basis} for a quasi-Banach space \(\SX\) is a sequence \((\xx_n)_{n \in \mathbb{N}}\) in \(\SX\) such that for every \(x \in \SX\), there exists a unique sequence of scalars \((a_n)_{n \in \mathbb{N}}\) such that
	\[
	x = \sum_{n=1}^{\infty} a_n \xx_n,
	\]
	where the series converges in the norm of \(\SX\). In other words, for every \(x \in \SX\), there exists a unique sequence of coefficients $(a_n)_{n\in\mathbb N}$ in $\mathbb R$ or \(\mathbb{C}\) such that \(x\) can be represented as a convergent series of the basis elements. Some of the most important properties of these types of bases are the following:

	\begin{itemize}
		\item \text{Uniqueness:} the representation of any element \(x \in \SX\) as a series \(\sum_{n=1}^{\infty} a_n \xx_n\) is unique.
		\item \text{Finite Dimensional Approximation:} for each \(x \in \SX\) and each \(m \in \mathbb{N}\), the partial sum \(S_m(x) = \sum_{n=1}^m a_n \xx_n\) approximates \(x\) in the norm of \(\SX\). That is, \(S_m(x) \to x\) as \(m \to \infty\).
		\item \text{Continuous Coefficient Functionals:} the coefficient functionals (or biorthogonal functionals) \(\xx_n^*\)  defined by \(\xx_n^*(x) = a_n\) (where \(a_n\) are the coefficients in the representation of \(x\)) are continuous linear functionals on \(\SX\). Under this contidion, each $x\in\SX$ could be expressed like
		$$x=\sum_{n=1}^{\infty}\mathbf x_n^*(x)\mathbf x_n.$$
	\end{itemize}
	
	A weaker condition than Schauder is the notion of a Markushevich basis. Consider a system \((\xx_n, \xx_n^*)_{n \in \mathbb{N}}\subset \SX\times \SX^*\) veriying the following conditions:
	\begin{enumerate}
		\item $\xx_n^*(\xx_m)=\delta_{n,m}$ for all \(n, m \in \mathbb{N}\), where \(\delta_{nm}\) is the Kronecker delta.
		\item The linear span of \((\xx_n)_{n \in \mathbb{N}}\) is dense in \(\SX\).
		\item The linear span of \((\xx_n^*)_{n \in \mathbb{N}}\) is weak*-dense in \(\SX^*\).
	\end{enumerate}
	Under these conditions, $\mathcal B=(\xx_n)_{n\in\mathbb N}$ is called a  \textbf{Markushevich basis} with the dual basis $\mathcal B^*=(\xx_n^*)_{n\in\mathbb N}$ (we remark here that the dual basis is also unique). Of course, every Schauder basis is a Markushevich basis, but not every Markushevich basis is Schauder. The difference lies in the convergence of the series representation: Markushevich bases do not require norm convergence, while Schauder bases do.  An example of that could be the trigonometric system in $\mathcal{C}([0,1])$.
	
	In this paper, we will assume that if $(\mathbf x_n)_{n\in\mathbb N}$ is a Markushevich or a Schauder basis, it is also semi-normalized, that is, there are two positive constants $\mathbf c_1,\mathbf c_2>0$ such that
	$$0<\mathbf c_1\leq \inf_n\min\lbrace\Vert \xx_n\Vert,\Vert \xx_n^*\Vert\rbrace\leq \sup_n\max\lbrace\Vert \xx_n\Vert,\Vert \xx_n^*\Vert\rbrace\leq\mathbf c_2<\infty.$$
	Also, as notation, we denote the support of $x\in\mathbb X$ as the set
	$$\text{supp}(x):=\lbrace n\in\mathbb N : \mathbf x_n^*(x)\neq 0\rbrace.$$
	
	Finally, we will talk about dictionaries. A \textbf{dictionary} \(\mathcal{D}\) is a subset of a quasi-Banach spaces \(\SX\) with the following properties:
	\begin{enumerate}
		\item Each atom \(g \in \mathcal{D}\) has norm 1, i.e., \(\|g\| = 1\) for all \(g \in \mathcal{D}\) and if $g\in\mathcal D$ then $-g\in\mathcal D$.
		\item The closed linear span of \(\mathcal{D}\) is dense in \(\SX\). In other words, for every \(x \in \SX\) and every \(\epsilon > 0\), there exist \(g_1, g_2, \ldots, g_n \in \mathcal{D}\) and scalars \(a_1, a_2, \ldots, a_n\) such that
		\[
		\left\| x - \sum_{i=1}^n a_i g_i \right\| < \epsilon.
		\]
	\end{enumerate}
	In the case for an orthonormal basis $\lbrace h_k\rbrace_{k=1}^\infty$ in a Hilbert space, a dictionary could be the collection $\mathcal D=\lbrace \pm h_k\rbrace_{k=1}^\infty$. Other examples to construct dictionaries are:
	\begin{itemize}
		\item \textbf{Haar Dictionary:} in the space \(L^2([0,1])\), the Haar wavelet system. The atoms are the Haar wavelets, which are piecewise constant functions that form an orthonormal basis for \(L^2([0,1])\).
		\item \textbf{Overcomplete Dictionary:} in the context of signal processing, an overcomplete dictionary might consist of a union of several bases (e.g., wavelet bases) to provide more flexibility in representing signals.
	\end{itemize}
	
	Related to the properties of dictionaries, we can mention the following:
	
	\begin{itemize}
		\item \textbf{Redundancy:} dictionaries are often redundant, meaning that there can be multiple representations of the same element in terms of the dictionary atoms. This redundancy can be advantageous in applications such as \textit{sparse} approximation.
		\item \textbf{Flexibility:} dictionaries are not required to be bases, allowing more flexibility in the choice of atoms. This makes them suitable for various approximation and signal processing tasks where traditional bases may not be ideal.
		\item \textbf{Approximation:} dictionaries are used in various greedy algorithms for approximation, where elements of \(X\) are approximated by selecting atoms from \(\mathcal{D}\) based on certain criteria (e.g., matching pursuit and orthogonal matching pursuit).
	\end{itemize}

	\section{Greedy algorithms for dictionaries in Hilbert spaces}\label{sec3}
	Let $\mathbb H$ be a Hilbert space (over $\mathbb R$) and $\mathcal D$ a dictionary in $\mathbb H$. Related to this dictionary, we define $A_\tau(\mathcal D)$, for $\tau>0$, as the clausure in $\mathbb H$ of the following set of functions:
	$$\left\lbrace f\in\mathbb H : f=\sum_{k\in\Lambda}a_k g_k, \; g_k\in\mathcal D,\; \vert\Lambda\vert<\infty\;\text{and}\; \sum_{k\in\Lambda}\vert a_k\vert^\tau\leq 1\right\rbrace.$$
	
	To introduce the new version of the Relaxed Greedy Algorithm studied in \cite{DT}, we need to study first the Pure Greedy Algorithm.
	
	Given $f\in\mathbb H$, we let $g=g(f)\in\mathcal D$ be an element from $\mathcal D$ which maximized $\langle f,g\rangle$ (we assume for simplicity that such a maximizer exists) and
	$$G(f)=G(f,\mathcal D):=\langle f,g\rangle g,$$
	and
	$$R(f)=R(f,\mathcal D):=f-G(f).$$
	
	\textbf{Pure Greedy Algorithm:} define $R_0(f)=R_0(f,\mathcal D):=f$ and $G_0(f)=G_0(f,\mathcal D):=0$. Then, for each $m\geq 1$, 
	$$G_m(f)=G_m(f,\mathcal D):=G_{m-1}(f)+G(R_{m-1}(f)),$$
	$$R_m(f)=R_m(f,\mathcal D):=f-G_m(f)=R(R_{m-1}(f)).$$
	
	\textbf{Relaxed Greedy Algorithm:} define $R_0^r(f)=R_0^r(f,\mathcal D):=f$ and $G_0^r(f)=G_0^r(f,\mathcal D):=0$. For $m=1$, we set $G_1^r(f)=G_1^r(f,\mathcal D):=G_1(f)$ and $R_1^r(f)=R_1^r(f,\mathcal D):=R_1(f)$. Then, for $m\geq 2$,
	$$G_m^r(f)=G_m^r(f,\mathcal D):=\left(1-\dfrac{1}{m}\right)G_{m-1}^r(f)+\dfrac{1}{m}g(R_{m-1}^r(f)),$$
	$$R_m^r(f)=R_m^r(f,\mathcal D):=f-G_m^r(f).$$
	
	Using this algorithm, in \cite{DT} we can find the following result.
	
	\begin{theorem}\label{dtt}
		Let $\mathcal D$ be a dictionary in a Hilbert space. Then,
		$$\Vert f-G_m^r(f)\Vert\leq \dfrac{2}{\sqrt{m}},\; m=1,2,....,$$
		for every $f\in A_1(\mathcal D)$.
	\end{theorem}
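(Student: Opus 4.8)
The plan is to prove the bound by induction on $m$, controlling the quantity $\|R_m^r(f)\|^2$ via the inner-product identity that governs the relaxed step. First I would record the basic inequality coming from the definition: writing $R_m^r(f) = (1-\tfrac1m)R_{m-1}^r(f) + \tfrac1m\bigl(f - g(R_{m-1}^r(f))\bigr)$, I would expand $\|R_m^r(f)\|^2$ and use that $g = g(R_{m-1}^r(f))$ maximizes $\langle R_{m-1}^r(f), \cdot\rangle$ over $\mathcal D$. The key observation, available because $f \in A_1(\mathcal D)$, is that $\langle R_{m-1}^r(f), g\rangle \geq |\langle R_{m-1}^r(f), f\rangle| \geq \|R_{m-1}^r(f)\|^2$ up to the relevant normalization; more precisely, if $f = \sum_k a_k g_k$ with $\sum_k|a_k|\le 1$, then $\langle R_{m-1}^r(f), f\rangle = \sum_k a_k \langle R_{m-1}^r(f), g_k\rangle$, so $\max_{g\in\mathcal D}\langle R_{m-1}^r(f),g\rangle \geq \langle R_{m-1}^r(f), f\rangle$ as $\sum|a_k|\le1$ and $\mathcal D$ is symmetric.

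Next I would carry out the expansion carefully. Set $a := \|R_{m-1}^r(f)\|^2$, and note $f - g = R_{m-1}^r(f) - (g - G_{m-1}^r(f))$; one computes
\begin{equation*}
\|R_m^r(f)\|^2 = \Bigl\|\Bigl(1-\tfrac1m\Bigr)R_{m-1}^r(f) + \tfrac1m(f-g)\Bigr\|^2,
\end{equation*}
and then uses $\|f\|\le 1$ (since $f\in A_1(\mathcal D)$) together with $\|g\|=1$ and the maximality of $\langle R_{m-1}^r(f),g\rangle$ to bound the cross terms. After collecting terms this should yield a recursion of the shape
\begin{equation*}
\|R_m^r(f)\|^2 \leq \Bigl(1-\tfrac1m\Bigr)^2\|R_{m-1}^r(f)\|^2 + \tfrac{C}{m^2}
\end{equation*}
for a suitable absolute constant $C$ (tracking the constant precisely so that the final bound is $2/\sqrt m$). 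The base case $m=1$ is handled separately: $G_1^r(f) = \langle f, g(f)\rangle g(f)$ is the Pure Greedy step, and $\|R_1(f)\|^2 = \|f\|^2 - \langle f,g(f)\rangle^2 \le 1 - \|f\|^4 \le 1 \le 4$, giving $\|R_1^r(f)\| \le 2$.

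For the induction, I would prove $\|R_m^r(f)\|^2 \leq 4/m$ by assuming it for $m-1$ and plugging into the recursion: $(1-\tfrac1m)^2 \cdot \tfrac{4}{m-1} + \tfrac{C}{m^2}$, and check this is $\le \tfrac4m$ by elementary manipulation — $(1-\tfrac1m)^2\tfrac{1}{m-1} = \tfrac{(m-1)}{m^2}$, so the right side is $\tfrac{4(m-1)}{m^2} + \tfrac{C}{m^2} = \tfrac{4m - 4 + C}{m^2}$, which is $\le \tfrac4m = \tfrac{4m}{m^2}$ precisely when $C \le 4$; this pins down the allowable constant in the cross-term estimate and confirms the choice. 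Taking square roots gives $\|f - G_m^r(f)\| = \|R_m^r(f)\| \le 2/\sqrt m$.

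The main obstacle I anticipate is getting the constant in the recursion small enough — specifically, bounding the cross term $\tfrac2m(1-\tfrac1m)\langle R_{m-1}^r(f), f - g\rangle$ in the right direction. Since $g$ maximizes $\langle R_{m-1}^r(f),\cdot\rangle$ and $\langle R_{m-1}^r(f),f\rangle \le \langle R_{m-1}^r(f),g\rangle$, the term $\langle R_{m-1}^r(f), f-g\rangle \le 0$, so it actually helps rather than hurts; the care is to not throw away too much and to bound $\|f-g\|^2 \le 2(\|f\|^2 + \|g\|^2) \le 4$ (or better, using $p$-convexity / parallelogram-type estimates in the Hilbert setting) so that the $\tfrac1{m^2}\|f-g\|^2$ term contributes at most $\tfrac{4}{m^2}$, and then verifying the arithmetic above closes. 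A secondary subtlety is the indexing near $m=1,2$ where the recursion starts, but this is dispatched by the explicit computation of the Pure Greedy step.
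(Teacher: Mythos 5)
Your proof is correct and follows essentially the same route as the paper's (the $\alpha=1$ case of its proof of Theorem \ref{than}, after DeVore--Temlyakov): expand $\|R_m^r(f)\|^2$ for the relaxed step, use $f\in A_1(\mathcal D)$ and the maximality of $g$ to make the cross term nonpositive, bound the quadratic remainder by $4/m^2$, and close the induction $\|R_m^r(f)\|^2\le 4/m$. The only cosmetic difference is that you group the step as $(1-\tfrac1m)R_{m-1}^r(f)+\tfrac1m(f-g)$ and bound the remainder via $\|f-g\|\le 2$, whereas the paper writes it as $R_{m-1}^r(f)+\tfrac1m(G_{m-1}^r(f)-g)$ and runs a separate induction to get $\|G_{m-1}^r(f)\|\le 1$.
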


	Is it possible to improve the last bound with a better power? With the aim of investigating whether the previous result can be improved, we will introduce a “power” factor in the definition of the algorithm. For that, consider a real number $\alpha\geq 0$. 
	
	\textbf{Power-Relaxed Greedy Algorithm:} define $\mathcal R_0^{r}(f)=\mathcal R_0^{r}(f,\mathcal D):=f$ and $\mathcal T_0^{r}(f)=\mathcal T_0^{r}(f,\mathcal D):=0$. For $m=1$, we set $\mathcal T_1^{r}(f)=\mathcal T_1^{r}(f,\mathcal D):=G_1(f)$ and $\mathcal R_1^{r}(f)=\mathcal R_1^{r}(f,\mathcal D):=R_1(f)$. Then, for $m\geq 2$,
	$$\mathcal T_m^{r}(f)=\mathcal T_m^{r}(f,\mathcal D):=\left(1-\dfrac{1}{m^\alpha}\right)\mathcal T_{m-1}^{r}(f)+\dfrac{1}{m^\alpha}g(\mathcal R_{m-1}^{r}(f)),$$
	$$\mathcal R_m^{r}(f)=\mathcal R_m^{r}(f,\mathcal D):=f-\mathcal T_m^{r}(f).$$
	
	We show the following theorem.
	
	\begin{theorem}\label{than}
		Let $\mathcal D$ be a dictionary in a Hilbert space. For $\alpha\leq 1$ and $m=1,2,...$,
		$$\Vert f-\mathcal T_m^r(f)\Vert^2\leq \dfrac{4}{m^{\alpha}},$$
		for every $f\in A_1(\mathcal D)$.
	\end{theorem}
	
	In \cite{DT}, the authors proved that, for a constant $A>0$ and any sequence of positive real scalars $(a_n)_n\in\mathbb N$ satisfying the inqueality
	$$a_m\leq \left(1-\dfrac{2}{m}\right)a_{m-1}+\dfrac{A}{m^{2}},\; m=2,3,....$$
	with $a_1\leq A$, it is possible to obtain that
	$$a_m\leq \dfrac{A}{m},\, m=1,2,...,$$
	and this is the main key to show Theorem \ref{dtt}. Here, we extend this inequality.
	
	\begin{lemma}\label{technical}
		Let $(a_n)_{n\in\mathbb N}$ be a sequence of positive real numbers such that $a_1\leq A$ for some $A>0$ and
		$$a_m\leq \left(1-\dfrac{2}{m^\alpha}\right)a_{m-1}+\dfrac{A}{m^{2\alpha}},\; m=2,3,....$$
		Hence, if $\alpha\leq 1$, $a_m\leq \dfrac{A}{m^\alpha}$.
	\end{lemma}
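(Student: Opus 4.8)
The plan is to split on the size of $\alpha$: for $0<\alpha\le 1$ I would run the same induction on $m$ used in \cite{DT} for the exponent $1$, isolating the one genuinely new elementary estimate it needs, and for $\alpha>1$ I would exhibit an explicit sequence that satisfies the hypotheses but violates the conclusion.

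For $0<\alpha\le 1$ I would prove $a_m\le A/m^\alpha$ for all $m$ by induction on $m$. The base case $m=1$ is exactly $a_1\le A=A/1^\alpha$. For the inductive step, assume $a_{m-1}\le A(m-1)^{-\alpha}$. If $m^\alpha\le 2$, then $1-2/m^\alpha\le 0$ and, since $a_{m-1}>0$, the recurrence forces $a_m\le A/m^{2\alpha}\le A/m^\alpha$. If $m^\alpha>2$, then $1-2/m^\alpha>0$, so inserting the inductive bound reduces the step to the scalar inequality
$$\Bigl(1-\frac{2}{m^\alpha}\Bigr)\frac{1}{(m-1)^\alpha}+\frac{1}{m^{2\alpha}}\le\frac{1}{m^\alpha}.$$
Multiplying through by $m^{2\alpha}(m-1)^\alpha>0$ and using $(m^\alpha)^2-2m^\alpha=(m^\alpha-1)^2-1$, this becomes $(m^\alpha-1)^2-1\le(m^\alpha-1)(m-1)^\alpha$; dividing by $m^\alpha-1>0$, it is implied by $(m-1)^\alpha\ge m^\alpha-1$, i.e.\ by $(m-1)^\alpha+1\ge m^\alpha$. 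That last inequality is the subadditivity $(x+y)^\alpha\le x^\alpha+y^\alpha$ for $0<\alpha\le 1$, applied with $x=m-1$ and $y=1$; for $\alpha=1$ it degenerates to the identity $(m-1)+1=m$ used in \cite{DT}, so the argument genuinely extends that lemma and the induction closes.

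For $\alpha>1$ I would disprove the conclusion by an example. Set $a_1:=A$ and, for $m\ge 2$, define $a_m:=(1-2m^{-\alpha})a_{m-1}+Am^{-2\alpha}$, so the recurrence holds with equality. Since $\alpha>1$ we have $m^\alpha\ge 2^\alpha>2$ for every $m\ge 2$, hence $1-2m^{-\alpha}>0$ and an easy induction gives $a_m>0$ for all $m$; thus $(a_m)$ satisfies every hypothesis of the lemma. Iterating the recurrence yields $a_m\ge A\prod_{k=2}^m(1-2k^{-\alpha})$. As $\sum_k k^{-\alpha}<\infty$ and all factors lie in $(0,1)$, the infinite product $c_\alpha:=\prod_{k=2}^\infty(1-2k^{-\alpha})$ converges to a strictly positive number and the partial products decrease to it; hence $a_m\ge Ac_\alpha>0$ for all $m$. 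Since $A/m^\alpha\to 0$, for all large $m$ one has $a_m\ge Ac_\alpha>A/m^\alpha$, so the bound cannot hold when $\alpha>1$.

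The only substantive input beyond \cite{DT} is the subadditivity inequality $(m-1)^\alpha+1\ge m^\alpha$; the remaining steps (clearing denominators, positivity of the example, convergence of the infinite product) are routine. I expect the sole point needing care to be the inductive step for the small values of $m$ where $1-2/m^\alpha$ is negative: there one must argue from positivity of the sequence rather than from the inductive bound, which is precisely why the case split $m^\alpha\le 2$ versus $m^\alpha>2$ is introduced.
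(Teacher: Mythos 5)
Your proof is correct and, on the inductive core for $0<\alpha\le 1$, follows essentially the same route as the paper: reduce the inductive step to the elementary inequality $m^{2\alpha}-2m^\alpha+(m-1)^\alpha\le m^\alpha(m-1)^\alpha$ and settle it with the subadditivity bound $m^\alpha\le (m-1)^\alpha+1$ (the paper proves this same fact by showing $x\mapsto x^\alpha-(x-1)^\alpha$ is decreasing and equals $1$ at $x=1$). You go beyond the paper in two places. First, you observe that for small $m$ the coefficient $1-2/m^\alpha$ can be negative (e.g.\ $\alpha=1/2$, $m=2$), in which case one may not substitute the inductive bound for $a_{m-1}$; your case split $m^\alpha\le 2$ versus $m^\alpha>2$, using positivity of $a_{m-1}$ to get $a_m\le A/m^{2\alpha}\le A/m^\alpha$ directly in the first case, repairs a genuine gap in the paper's induction, which substitutes the inductive hypothesis without checking the sign. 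Second, for $\alpha>1$ the paper only shows that the inequality used to close the induction fails for large $m$ (its left side tends to $+\infty$ while the right side tends to $1$); strictly speaking this shows the method breaks down, not that the conclusion is false. Your explicit sequence satisfying the recurrence with equality, bounded below by $A\prod_{k\ge 2}\bigl(1-2k^{-\alpha}\bigr)>0$ via convergence of the infinite product, actually disproves the bound $a_m\le A/m^\alpha$ and is the logically complete argument for the ``only in the case $\alpha\le 1$'' half of the statement. The remaining verifications (positivity of the example since $2\cdot 2^{-\alpha}<1$ for $\alpha>1$, and the algebra $(m^\alpha-1)^2-1\le(m^\alpha-1)(m-1)^\alpha$) all check out.
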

	\begin{Proof}
		Of course, for $m=1$ is true by hypothesis so we proceed by induction. Assume that
		$$a_{m-1}\leq  \dfrac{A}{(m-1)^\alpha}.$$
		Then,
		\begin{eqnarray*}
			a_m&\leq& \left(1-\dfrac{2}{m^\alpha}\right)a_{m-1}+\dfrac{A}{m^{2\alpha}}\\
			&\leq& \left(1-\dfrac{2}{m^\alpha}\right)\dfrac{A}{(m-1)^\alpha}+\dfrac{A}{m^{2\alpha}}\\
			&=&A\dfrac{m^{2\alpha}-2m^\alpha+(m-1)^\alpha}{m^{2\alpha}(m-1)^\alpha}.
		\end{eqnarray*}
		Hence,
		\begin{eqnarray}\label{imp}
			\nonumber	\dfrac{m^{2\alpha}-2m^\alpha+(m-1)^\alpha}{m^{2\alpha}(m-1)^\alpha}\leq \dfrac{1}{m^{\alpha}}&\Leftrightarrow& m^{2\alpha}-2m^\alpha+(m-1)^\alpha\leq m^\alpha(m-1)^\alpha\\
			\nonumber	&\Leftrightarrow& m^\alpha -2 +\left(\dfrac{m-1}{m}\right)^\alpha \leq (m-1)^\alpha\\
			&\Leftrightarrow& m^\alpha-(m-1)^\alpha\leq 2-\left(\dfrac{m-1}{m}\right)^\alpha.
		\end{eqnarray}
		Of course, if $\alpha=1$, the inequality is true:
		\begin{eqnarray*}
			1\leq 2-\dfrac{m-1}{m}\Leftrightarrow \dfrac{m-1}{m}\leq 1\Leftrightarrow m-1\leq m,
		\end{eqnarray*}
		and this inequality is trivial. Now, if $\alpha<1$, we define $f(x)=x^\alpha -(x-1)^\alpha$. Then, it is trivial to see that this function is decreasing since 
		$$f'(x)=\alpha (x^{\alpha-1}-(x-1)^{\alpha-1})<0\Leftrightarrow \dfrac{1}{x^{1-\alpha}}\leq \dfrac{1}{(x-1)^{1-\alpha}},$$
		and since we work with $x\geq 1$, $f(x)$ is decreasing. On the other hand, $f(1)=1$, so
		$$m^\alpha-(m-1)^{\alpha}\leq 1.$$
		On the other hand,
		$$\left(\dfrac{m-1}{m}\right)^\alpha=\left(1-\dfrac{1}{m}\right)^\alpha\leq 1,$$
		then,
		$$m^\alpha-(m-1)^\alpha+\left(\dfrac{m-1}{m}\right)^\alpha\leq 2,$$
		concluding that \eqref{imp} is true. Hence, 
		$$a_m\leq \dfrac{A}{m^\alpha},\;\forall m\in\mathbb N,$$
		when $\alpha\leq 1$.
	\end{Proof}
	
	\begin{Proof}[{Proof of Theorem \ref{than}}]
		To prove this theorem, we follow the idea of Theorem \ref{dtt}. 
		\begin{eqnarray}
			\Vert f-\mathcal T_m^r(f)\Vert^2&=&\left\Vert (f-\mathcal T_{m-1}^r(f))+\dfrac{1}{m^{\alpha}}(\mathcal T_{m-1}^r(f)-g(\mathcal R_{m-1}^r(f)))\right\Vert^2\\
			&=& \Vert (f-\mathcal T_{m-1}^r(f))\Vert^2 + \dfrac{1}{m^{2\alpha}}\Vert \mathcal T_{m-1}^r(f)-g(\mathcal R_{m-1}^r(f))\Vert^2\\
			&+&2\langle f-\mathcal T_{m-1}^r(f), \mathcal T_{m-1}^r(f)-g(\mathcal R_{m-1}^r(f))\rangle.
		\end{eqnarray}
		As we can find in \cite{DT}, it is not hard to show that
		$$\langle f-\mathcal T_{m-1}^r(f), \mathcal T_{m-1}^r(f)-g(\mathcal R_{m-1}^r(f))\rangle\leq -\Vert f-\mathcal T_{m-1}^r(f)\Vert^2,$$
		so we obtain
		\begin{eqnarray}
			\Vert f-\mathcal T_m^r(f)\Vert^2\leq \left(1-\dfrac{2}{m^\alpha}\right)\Vert f-\mathcal T_{m-1}^r(f)\Vert^2+\dfrac{1}{m^{2\alpha}}\Vert \mathcal T_{m-1}^r(f)-g(\mathcal R_{m-1}^r(f))\Vert^2.
		\end{eqnarray}
		Now, we analyze the quantity $\Vert \mathcal T_{m-1}^r(f)-g(\mathcal R_{m-1}^r(f))\Vert^2$. On the one hand, $f\in\mathcal A_1(\mathcal D)$, $f=\sum_{j\in\Lambda} a_j g_j$ with $g_j\in\mathcal D$ with $\sum_{j\in\Lambda}\vert c_j\vert\leq 1$. Then,
		$$\Vert f\Vert\leq \sum_{j\in\Lambda}\vert c_j\vert\Vert g_j\Vert=\sum_{j\in\Lambda}\vert c_j\vert\leq 1.$$
		On the other hand, since $\mathcal T_1^r(f)=G_1(f)$, it is obvious that $\Vert \mathcal T_1^r(f)\Vert\leq 1$. Now, by induction, assuming that $\Vert \mathcal T_{m-1}^r(f)\Vert\leq 1$ for some $m\in\mathbb N$, then
		\begin{eqnarray*}
			\Vert \mathcal T_m^r(f)\Vert&=&\left\Vert \left(1-\dfrac{1}{(m-1)^{\alpha}}\right)\mathcal T_{m-1}^r(f)+\dfrac{1}{(m-1)^{\alpha}}g(\mathcal R_{m-1}^r(f))\right\Vert\\
			&\leq& \left(1-\dfrac{1}{(m-1)^{\alpha}}\right)\Vert \mathcal T_{m-1}^r(f)\Vert + \dfrac{1}{(m-1)^{\alpha}}\Vert g(\mathcal R_{m-1}^r(f))\Vert\\
			&\leq& 1-\dfrac{1}{(m-1)^{\alpha}}+\dfrac{1}{(m-1)^{\alpha}}=1,
		\end{eqnarray*}
		where we have used that $\Vert g(\mathcal R_{m-1}^r(f))\Vert \leq 1$ since it is an element from $\mathcal D$. Hence, $\Vert \mathcal T_m^r(f)\Vert\leq 1$ for every $m\in\mathbb N$ and $\Vert f\Vert\leq 1$. Thus,
		$$\Vert \mathcal T_{m-1}^r(f)-g(\mathcal R_{m-1}^r(f))\Vert\leq 2,$$
		obtaining
		$$\Vert f-\mathcal T_m^r(f)\Vert^2\leq \left(1-\dfrac{2}{m^\alpha}\right)\Vert f-\mathcal T_{m-1}^r(f)\Vert^2+\dfrac{4}{m^{2\alpha}}.$$
		Then, based on Lemma \ref{technical}, taking $a_m=\Vert f-\mathcal G_m^r(f)\Vert^2$ and $A=4$, we obtain, for $\alpha \geq 1$,
		$$\Vert f-\mathcal T_m^r(f)\Vert^2\leq \dfrac{4}{m^{\alpha}},$$
		but the optimal power is $\alpha=1$, where then, we are recovering the result proved in Theorem \ref{dtt}.
	\end{Proof}
	
	\textbf{Question 1.} Is it possible to find $\alpha_0>1$ such that
	$$\Vert f-\mathcal T_m^r(f)\Vert \lesssim \dfrac{1}{m^{\alpha_0/2}},\; m=1,2,..$$
	for every $f\in A_1(\mathcal D)?$
	
	\textbf{Question 2.} Is it possible to find another function $h(m)$ such that if we define, for $m\geq 2$,
	$$\mathcal T_m^r=\left(1-\dfrac{1}{h(m)}\right)\mathcal T_{m-1}^r(f)+\dfrac{1}{h(m)}g(\mathcal R_{m-1}^r(f)),$$
	such that we obtain a better bound than Theorem \ref{dtt}?

	\section{Thresholding Greedy Algorithms with respect to bases in quasi-Banach spaces}\label{sec4}
	The Thresholding Greedy Algorithm $(\mathcal G_m)_{m=1}^\infty$ (TGA for short) was first introduced by S. V. Konyagin and V. N. Temlyakov in \cite{KT}. More precisely, for a  Markushevich basis $\mathcal B=(\mathbf x_n)_{n=1}^\infty$ in a quasi-Banach space $\mathbb X$, for $x\in\mathbb X$ and $m\in\mathbb N$, 
	$$\mathcal G_m[\mathcal B,\mathbb X](x):=\mathcal G_m(x)=\sum_{n\in A}\mathbf x_n^*(x)\mathbf x_n,$$ where $A$ is any set so that $\vert A\vert = m$ and $$\min_{n\in A}\vert \mathbf x_n^*(x)\vert\geq \max_{n\not\in A}\vert \mathbf x_n^*(x)\vert.$$
	In this case, $\mathcal G_m(x)$ and the set $A=\supp(\mathcal G_m(x))$ are called a \textit{greedy sum} and a \textit{greedy set} of $f$ of order $m$, respectively.
	
	The most natural way to describe a greedy sum of an element $x\in\mathbb X$ is to take an injective map $\pi: \mathbb N\rightarrow\mathbb N$ such that $\supp(x)\subseteq\pi(\mathbb N)$ and $(\vert \mathbf x_{\pi(j)}^*(x)\vert)_{j=1}^\infty$ is non-increasing and then, consider the partial sums
	$$\mathcal G_m(x)=\sum_{j=1}^{m}\mathbf x_{\pi(j)}^*(x)\mathbf x_{\pi(j)}.$$
	Every such map $\pi$ is call a \textit{greedy ordering}.  In particular, a greedy sum is a projection $P_A(x)$, where, for $x\in\SX$ and $A$ a finite set of indices,
	$$P_A[\mathcal B,\SX](x)=P_A(x):=\sum_{n\in A}\xx_n^*(x)\xx_n.$$
	Hence, if we take $G=\lbrace \pi(1),\pi(2),\dots,\pi(m)\rbrace$, $P_G(x)$ is a greedy sum of $x$ of cardinality $m$ with $G$ the corresponding greedy set.
	
	As we can find in \cite{Wo3}, it is important to remark that $(\mathcal G_m)_{m\in\mathbb N}$ are neither linear nor continuous. For instance, we define the elements
	$$f_n := \dfrac{n^2+1}{n^2}\one_{A}+\one_{B},$$
	and
	$$g_n:=\one_{A}+\dfrac{n^2+1}{n^2}\one_{B},$$
	where $A$ and $B$ are disjoint and with cardinality $m$. Then, both elements converge to $\one_{A\cup B}$ but,
	$$\mathcal G_m(f_n)=\dfrac{n^2+1}{n^2}\one_{A},\;\;  \mathcal G_m(g_n)=\dfrac{n^2+1}{n^2}\one_{B},$$
	hence, $\mathcal G_m(f_n)\rightarrow \one_{A}$ and $ \mathcal G_m(g_n)\rightarrow \one_{B}$. Thus, $\mathcal G_m$ is not continuous. To see the non linearity, we  take, for instance, the canonical basis $\mathcal B=(\mathbf x_n)_{n\in\mathbb N}$ in the space $\ell_2$, where 
	$$\mathbf x_n=(0,0,\dots,0,\underbrace{1}_\text{position $n$},0,\dots).$$
	Define the following elements:
	$$f=\sum_{i=1}^k \mathbf x_i + \sum_{j=k+1}^\infty \dfrac{1}{j^3}\mathbf x_j,\;\;g=-\sum_{i=1}^k \mathbf x_i + \sum_{j=k+1}^\infty \dfrac{1}{j^3}\mathbf x_j.$$
	Then, $\mathcal G_k(f)=\sum_{i=1}^k \mathbf x_i$ and $ \mathcal G_k(g)=-\sum_{i=1}^k\mathbf x_i$, but $ \mathcal G_k(f+g)=2\sum_{j=k+1}^{2k}\dfrac{1}{j^3}\mathbf x_j$, that is different from $\mathcal G_k(f)+\mathcal G_k(g)=\mathbf 0$.
	
	Related to the convergence of the TGA, we can find different type of bases. We start by quasi-greedy bases introduced the first time in \cite{KT} in the context of Banach spaces and studied before in quasi-Banach spaces in \cite{AABW,W2000}.
	
	\begin{definition}
		We say that a Markushevich basis $\mathcal B$ in a quasi-Banach space $\mathbb X$ is quasi-greedy if there is a positive constant $C$ such that for every $x\in\SX$,
		$$\Vert x-P_A(x)\Vert\leq C\Vert x\Vert,$$
		whenever $A$ is a finite greedy set of $x$.
	\end{definition}
	
	As we can study in \cite{AABW, W2000}, this notion is the weaker condition related to the convergence of the algorithm in the sense that it is possible to prove that the basis is quasi-greedy if and only if, for every $x\in\SX$, the series $\sum_{n=1}^{\infty}\xx_{\pi(n)}^*(x)\xx_{\pi(n)}$ converges to $x$. 
	
	An stronger condition than quasi-greediness is unconditionality.
	
	\begin{definition}
		We say that a Markushevich basis $\mathcal B$ in a quasi-Banach space $\mathbb X$ is unconditional if there is a positive constant $C$ such that
		\begin{eqnarray}\label{defuncond}
			\Vert x-P_B(x)\Vert\leq C\Vert x\Vert,\; \forall \vert B\vert<\infty, \forall x\in\mathbb X.
		\end{eqnarray}
		The least constant verifying \eqref{defuncond} is denoted by $K[\mathcal B,\mathbb X]=K$ and we say that $\mathcal B$ is $K$-unconditional.
	\end{definition}
	
	It is clear then that any unconditional basis is quasi-greedy, but the converse is false in general as we can see in the following example (see \cite{KT}).
	
	\begin{example}\label{ex1}
		Consider the space $\SX$ of all sequences $\mathbf a=(a_n)_{n\in\mathbb N}\in c_0$ with norm
		$$\Vert\mathbf a\Vert=\left\lbrace \left(\sum_{j=1}^\infty a_j^2\right)^{1/2},\, \sup_{m\geq 1}\left\vert \sum_{j=1}^m \dfrac{a_j}{\sqrt{j}}\right\vert\right\rbrace.$$
		Take now the canonical basis $(\xx_n)_{n\in\mathbb N}$ in this space. The proof that the canonical basis is quasi-greedy could be found in \cite{KT} or \cite{BBGHO}. To show that the basis is not unconditional, we can take the element 
		$$\mathbf f:=\sum_{j=1}^{2m}\dfrac{(-1)^{j}}{\sqrt{j}}\xx_n.$$
		On the hand, it is clear that
		$$\Vert \mathbf f\Vert= \left(\sum_{j=1}^{2m} \left(\dfrac{1}{\sqrt{j}}\right)^2\right)^{1/2}\approx \sqrt{\ln(m+1)}.$$
		On the other hand, taking the set
		$$B=\lbrace 1,\dots, 2m\rbrace\cap 2\mathbb Z,$$
		$$\Vert P_B(\mathbf f)\Vert\gtrsim \sum_{j=1}^m\dfrac{1}{j}\approx \ln(m+1).$$
		Hence,
		$$\dfrac{\Vert P_B(\mathbf f)\Vert}{\Vert\mathbf f\Vert}\gtrsim \sqrt{\ln(m+1)},$$
		so the basis is not unconditional.
	\end{example}
	
	Now, since we have seen that quasi-greediness is the weakest condition respect to the convergence of the TGA, we introduce a condition that is the strongest one: greediness (\cite{KT}).
	
	\begin{definition}
		We say that a Markushevich basis $\mathcal B$ in a quasi-Banach space $\mathbb X$ is greedy if there is a positive constant $C$ such that for every $x\in\SX$,
		\begin{eqnarray}\label{defgreedy}
			\Vert x-P_A(x)\Vert\leq C\inf\left\lbrace\left\Vert x-\sum_{n\in B}b_n\xx_n\right\Vert, b_n\in\mathbb F, \vert B\vert\leq \vert A\vert\right\rbrace,
		\end{eqnarray}
		whenever $A$ is a finite greedy set of $x$. The least constant verifying \eqref{defgreedy} is denoted by $C_g[\mathcal B,\mathbb X]=C_g$ and we say that $\mathcal B$ is $C_g$-greedy.
	\end{definition}
	
	In other words, we say that a basis is greedy when the TGA produces the best approximation, meaning that in the context of the best approximation error $\sigma_m(x)$ where
	$$\sigma_m[\mathcal B,\SX](x)=\sigma_m(x):=\inf\left\lbrace\left\Vert x-\sum_{n\in B}b_n\xx_n\right\Vert, b_n\in\mathbb F, \vert B\vert\leq m\right\rbrace,$$
	the best sum we can write would be the greedy sum (up to an absolute constant).  As we have commented, this notion was introduced the first time in \cite{KT} in the context of Schauder bases in Banach spaces, but in \cite{AABW}, the authors analyzed these bases for general Markushevich bases in quasi-Banach spaces.
	
	Of course, every greedy basis is quasi-greedy since $\sigma_m(x)\leq\Vert x\Vert$, but the converse is false in general. To study one example of quasi-greedy that is not a greedy basis, we need the main characterization of greediness introduced in \cite{KT}  based on unconditionality and democracy.
	
	\begin{definition}
		We say that a Markushevich basis $\mathcal B$ in a quasi-Banach space $\mathbb X$ is super-democratic if there is $C>0$ such that
		\begin{eqnarray}\label{seqdem}
			\Vert\one_{\boldsymbol{\varepsilon} A}\Vert\leq C\Vert \one_{\boldsymbol{\eta} B}\Vert,
		\end{eqnarray}
		for every par of finite sets $\vert A\vert\leq\vert B\vert$ and any choice of signs $\boldsymbol{\varepsilon}\in\mathcal E_A$ and $\boldsymbol{\eta}\in\mathcal E_B$.
		The least constant verifying \eqref{seqdem} is denoted by $\Delta_s[\mathcal B,\mathbb X]=\Delta_s$ and we say that $\mathcal B$ is $\Delta_s$-super-democratic. If \eqref{seqdem} is satisfied for $\boldsymbol{\varepsilon}\equiv \boldsymbol{\eta}\equiv \boldsymbol{1}$, we say that the basis is $\Delta_d$-democratic.
	\end{definition}
	
	\begin{theorem}[{\cite{KT,AABW}}]
		Let $\mathcal B$ be a basis in a quasi-Banach space. The following are equivalent:
		\begin{itemize}
			\item $\mathcal B$ is greedy.
			\item $\mathcal B$ is unconditional and democratic.
			\item $\mathcal B$ is unconditional and super-democratic.
		\end{itemize}
	\end{theorem}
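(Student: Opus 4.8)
The plan is to run the cycle of implications: greedy $\Rightarrow$ (unconditional and democratic) $\Rightarrow$ (unconditional and super-democratic) $\Rightarrow$ greedy. Only two of these arrows carry real content, since super-democracy trivially implies democracy (put $\varepsilon\equiv\eta\equiv1$ in (\ref{seqdem})), and the passage from ``unconditional and democratic'' to ``unconditional and super-democratic'' is just the observation that $K$-unconditionality lets one insert arbitrary signs into $\one_A$ at the cost of a multiplier constant depending on $K$ and $p$. Throughout I would invoke the Aoki--Rolewicz theorem to work with a $p$-norm, so that $\|u+v\|^p\le\|u\|^p+\|v\|^p$, and reduce every statement to finitely supported vectors.

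For \emph{unconditional and democratic $\Rightarrow$ greedy}, fix a greedy set $A$ of $x=\sum_n a_n\xx_n$ with $|A|=m$ and an arbitrary competitor $z=x-\sum_{n\in B}b_n\xx_n$ with $|B|\le m$; the goal is $\|x-P_A(x)\|\le C\|z\|$, for then the infimum over $z$ gives $\|x-P_A(x)\|\le C\sigma_m(x)$. Split $x-P_A(x)=\sum_{n\in B\setminus A}a_n\xx_n+\sum_{n\notin A\cup B}a_n\xx_n$. The second block equals $P_{\N\setminus(A\cup B)}(z)$ and is therefore $\le K\|z\|$. For the first block, greediness of $A$ forces $|a_n|\le\alpha:=\min_{j\in A}|a_j|$ for every $n\in B\setminus A$; since $|B\setminus A|\le|A\setminus B|$ one can pick $E\subseteq A\setminus B$ with $|E|=|B\setminus A|$, on which $|a_n|\ge\alpha$. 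Unconditionality (through the multiplier estimate: coefficients of modulus $\le\alpha$ cost at most a factor $\alpha$ relative to $\one_{B\setminus A}$, and conversely $\alpha\,\one_E$ is dominated by $P_E(x)$), together with democracy used to pass from $\one_{B\setminus A}$ to $\one_E$, bounds this block by a constant multiple of $\|P_E(x)\|=\|P_E(z)\|\le K\|z\|$, the equality holding because $E\cap B=\emptyset$.

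For \emph{greedy $\Rightarrow$ democratic} (indeed super-democratic): greediness immediately yields quasi-greediness, because $\sigma_m(x)\le\|x\|$; and applying quasi-greediness to $\one_{\eta A}$, all of whose coefficients have equal modulus so that every subset of $A$ is a greedy set, gives the suppression bound $\|\one_{\eta A'}\|\le C_g\|\one_{\eta A}\|$ for $A'\subseteq A$. Now for disjoint $A,B$ with $|A|\le|B|$, apply the greedy inequality to $x=\one_{\varepsilon A}+2\,\one_{\eta B}$: any $B_0\subseteq B$ with $|B_0|=|A|$ is a greedy set of $x$, so $\|\one_{\varepsilon A}+2\one_{\eta(B\setminus B_0)}\|\le C_g\,\sigma_{|A|}(x)\le 2C_g\|\one_{\eta B}\|$, and discarding $\|2\one_{\eta(B\setminus B_0)}\|\le 2C_g\|\one_{\eta B}\|$ via the $p$-triangle inequality leaves $\|\one_{\varepsilon A}\|\lesssim\|\one_{\eta B}\|$; the non-disjoint case reduces to this using the suppression bound.

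The hard part will be \emph{greedy $\Rightarrow$ unconditional}, that is, $\|P_B(x)\|\le C\|x\|$ (equivalently $\|x-P_B(x)\|\le C\|x\|$, by the reverse triangle inequality) for every finite $B$. After reducing to finitely supported $x$ with $B\subseteq\supp(x)$, the case where $B$ is itself a greedy set of $x$ follows from quasi-greediness. The genuine obstruction is the general case: the naive idea --- perturb $x$ to an auxiliary vector $y$ for which $B$ becomes a greedy set with $y-P_B(y)=x-P_B(x)$ --- is circular, because the best $m$-term approximation of $y$ merely reproduces $\|x-P_B(x)\|$. The resolution, carried out in \cite{KT} and extended to the quasi-Banach setting in \cite{AABW}, is instead to compare $P_B(x)$ with $P_A(x)$, where $A$ is a greedy set of $x$ of the same cardinality $m$: the greedy inequality controls $\|x-P_A(x)\|$ by $\sigma_m(x)\le\|x-P_B(x)\|$, while the discrepancy $P_A(x)-P_B(x)$ is split into a piece supported on the largest coefficients (absorbed by quasi-greediness) and a piece supported on a set of at most $m$ of the smaller coefficients (absorbed by the democracy already in hand); the delicate point, and essentially the only genuine departure from the Banach case, is the quasi-Banach bookkeeping of the multiplier and ``lattice'' constants, where the $p$-convexity constant enters. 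Together with \emph{greedy $\Rightarrow$ democratic}, this yields greedy $\Rightarrow$ (unconditional and democratic), closing the cycle.
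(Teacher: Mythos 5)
The paper itself does not prove this theorem --- it is quoted from \cite{KT} and \cite{AABW} --- so there is no in-paper argument to compare against; your proposal has to stand on its own. Three of the four substantive arrows in your cycle are fine: super-democracy trivially implies democracy; unconditionality upgrades democracy to super-democracy by the usual sign-insertion; your proof of (unconditional $+$ democratic) $\Rightarrow$ greedy is the standard Konyagin--Temlyakov decomposition over $B\setminus A$, $A\setminus B$ and $(A\cup B)^c$ and is correct; and your proof of greedy $\Rightarrow$ super-democratic via the auxiliary vector $\one_{\varepsilon A}+2\one_{\eta B}$ and the suppression bound from quasi-greediness also works.

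The gap is in greedy $\Rightarrow$ unconditional, which you call the hard part and then leave essentially unproved. First, the perturbation argument you dismiss as ``circular'' is in fact the correct and short proof, and the circularity you describe is illusory. Set $t>\max_n|\xx_n^*(x)|$ and $y:=x-P_B(x)+t\one_B$, so that $B$ is a greedy set of $y$ of order $m:=|B|$ and $y-P_B(y)=x-P_B(x)$. The point you miss is that $\sigma_m(y)$ allows \emph{arbitrary} coefficients on an $m$-element set, not only the coefficients of $y$: taking the competitor $z=\sum_{n\in B}\bigl(t-\xx_n^*(x)\bigr)\xx_n$, which is supported on the $m$-element set $B$, gives $y-z=x$, hence $\sigma_m(y)\le\|x\|$ and $\|x-P_B(x)\|\le C_g\|x\|$. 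The approximant $t\one_B$ alone is the one that merely reproduces $\|x-P_B(x)\|$; it is precisely the extra freedom of $\sigma_m$ over projections that distinguishes greedy from almost-greedy here, and indeed almost-greedy bases need not be unconditional. Second, the replacement you sketch --- comparing $P_B(x)$ with $P_A(x)$ for a greedy set $A$ of the same cardinality, absorbing $P_{A\setminus B}(x)$ by quasi-greediness and $P_{B\setminus A}(x)$ by democracy --- does not close: quasi-greediness does not control projections onto arbitrary subsets of a greedy set, and the multiplier estimate needed to dominate the small-coefficient block by $\alpha\,\|\one_{B\setminus A}\|$ is itself a form of unconditionality, i.e.\ the very thing being proved. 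As written your cycle therefore does not close; replacing that paragraph by the three-line argument above repairs the proof.
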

	
	\begin{example}
		There are several examples of greedy bases. For instance, if we have an orthonormal basis in a Hilbert space $\SH$, for every $x\in\mathbb H$,
		$$\Vert x-P_G(x)\Vert=\sigma_{\vert G\vert}(x),$$
		whenever $G$ is a finite greedy set of $x$.
		
		Another example could be the canonical basis $\mathcal B=(\xx_n)_{n\in\mathbb N}$ in the space of sequences $\ell_p$ with $1\leq p<\infty$. The proof is so easy: on the one hand,
		$$\Vert\one_A\Vert^p=\vert A\vert^p,\; \forall \vert A\vert<\infty,$$
		so the basis is $1$-democratic. On the other hand, for $x\in\SX$ and $\vert A\vert<\infty$,
		$$\Vert x-P_A(x)\Vert^p= \sum_{j \not\in A} \vert \xx_j^*(x)\vert^p\leq  \sum_{j=1}^\infty \vert\xx_j^*(x)\vert^p=\Vert x\Vert^p,$$
		so the basis is $1$-unconditional.
	\end{example}
	
	Now, as an intermediate notion between greediness and quasi-greedines we have  almost-greediness, where instead of taking the best approximation under elements of the form $y=\sum_{n\in A}a_n\mathbf x_n$ for any sequence of scalars $(a_n)_{n\in A}$, we have the best approximation by projections (\cite{DKKT}).
	
	\begin{definition}
		We say that a Markushevich basis $\mathcal B$ in a quasi-Banach space $\mathbb X$ is almost-greedy if there is a positive constant $C$ such that for every $x\in\SX$,
		\begin{eqnarray}\label{defalmost}
			\Vert x-P_A(x)\Vert\leq C\inf_{\vert B\vert\leq\vert A\vert}\Vert x-P_B(x)\Vert,
		\end{eqnarray}
		whenever $A$ is a finite greedy set of $x$. The least constant verifying \eqref{defalmost} is denoted by $C_{al}[\mathcal B,\SX]=C_{al}$ and we say that $\mathcal B$ is $C_{al}$-almost-greedy.
	\end{definition}
	As for greediness, the definition was introduced in the context of Schauder bases in Banach spaces in \cite{DKKT}, but recently, in \cite{AABW}, the authors introduced the notion for general Markushevich bases in quasi-Banach spaces. It is clear  that we have the following relations:
	\begin{center}
		greediness $\Rightarrow$ almost-greediness $\Rightarrow$ quasi-greediness,
	\end{center}
	where the first implication is due to the fact that $\sigma_m(x)\leq \Vert x-P_A(x)\Vert$ for any set $A$ of cardinality less than or equal to $m\in\mathbb N$, and the second implication is due to $\inf_{\vert B\vert\leq m}\Vert x-P_B(x)\Vert\leq \Vert x\Vert$ taking $B=\emptyset$. As in the case of greediness, we have a characterization.

	\begin{theorem}[{\cite{DKKT,AABW}}]\label{thalmost}
		Let $\mathcal B$ be a Markushevich basis in a quasi-Banach space. The following are equivalent:
		\begin{enumerate}
			\item $\mathcal B$ is almost-greedy.
			\item $\mathcal B$ is quasi-greedy and democratic.
			\item $\mathcal B$ is quasi-greedy and super-democratic.
		\end{enumerate}
	\end{theorem}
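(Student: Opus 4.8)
The plan is to run the cycle $(1)\Rightarrow(2)\Rightarrow(3)\Rightarrow(1)$, with essentially all the difficulty concentrated in the last step. By Aoki--Rolewicz I may assume, after a renorming that changes every greedy-type constant by at most a fixed factor, that $\SX$ is $p$-Banach for some $0<p\le 1$, so that P3) is available. Two soft consequences of $C_q$-quasi-greediness will be used throughout: greedy projections are bounded, $\Vert P_G(x)\Vert\le(1+C_q^p)^{1/p}\Vert x\Vert$ for every finite greedy set $G$ of $x$ (write $P_G(x)=x-(x-P_G(x))$ and use P3)); and, because every subset of the support of a constant-coefficient vector is a greedy set of it, $\Vert\one_{\eta C}\Vert\le(1+C_q^p)^{1/p}\Vert\one_{\eta D}\Vert$ for all finite $C\subseteq D$ and all signs $\eta$. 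Now $(1)\Rightarrow(2)$ is soft: quasi-greediness is \eqref{defalmost} with $B=\emptyset$; for democracy, given $|A|\le|B|$, assume first that $A\cap B=\emptyset$ and apply \eqref{defalmost} to $x=\one_A+t\one_B$ with $t>1$, so that $B$ is the greedy set of $x$ of order $|B|$; then $x-P_B(x)=\one_A$, while $x-P_A(x)=t\one_B$ shows $\inf_{|D|\le|B|}\Vert x-P_D(x)\Vert\le t\Vert\one_B\Vert$, and letting $t\to1^{+}$ gives $\Vert\one_A\Vert\le C_{al}\Vert\one_B\Vert$; the overlapping case follows by splitting $A=(A\setminus B)\cup(A\cap B)$ and invoking the disjoint case, the subset estimate, and P3). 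For $(2)\Rightarrow(3)$ one first observes that constant-coefficient sums on a quasi-greedy basis are sign-independent up to a constant, $\Vert\one_{\varepsilon A}\Vert\approx\Vert\one_A\Vert$: for ``$\gtrsim$'' split $A=A^{+}\cup A^{-}$ according to the sign, note that $A^{+}$ and $A^{-}$ are greedy sets of $\one_{\varepsilon A}$, remove each of them by quasi-greediness and use P3); ``$\lesssim$'' is the subset estimate together with P3). Combining with democracy yields super-democracy \eqref{seqdem}.

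For $(3)\Rightarrow(1)$ fix $x\in\SX$, $m\in\mathbb N$, a greedy set $A$ of $x$ of order $m$, and an arbitrary finite set $B$ with $|B|\le m$; put $\alpha:=\min_{n\in A}|\xx_n^*(x)|$, so that $|\xx_n^*(x)|\le\alpha$ for all $n\notin A$. The backbone of the argument is the identity
$$x-P_A(x)=\bigl(x-P_B(x)\bigr)-P_{A\setminus B}(x)+P_{B\setminus A}(x),$$
so by P3) it suffices to dominate $\Vert P_{A\setminus B}(x)\Vert$ and $\Vert P_{B\setminus A}(x)\Vert$ by $\Vert x-P_B(x)\Vert$. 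Write $y:=x-P_B(x)$. Since the coefficients of $y$ on $A\setminus B$ have modulus $\ge\alpha$ while every other nonzero coefficient of $y$ has modulus $\le\alpha$, the set $A\setminus B$ is a greedy set of $y$, and the bounded-projection fact gives $\Vert P_{A\setminus B}(x)\Vert=\Vert P_{A\setminus B}(y)\Vert\le(1+C_q^p)^{1/p}\Vert y\Vert$.

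The term $P_{B\setminus A}(x)$ carries the content, and I would bound it by chaining three estimates. \emph{(a)} A lattice upper bound valid on any quasi-greedy basis: for every finite $F$,
$$\bignorm{\sum_{n\in F}\xx_n^*(x)\xx_n}\le C(C_q,p)\Bigl(\max_{n\in F}|\xx_n^*(x)|\Bigr)\Vert\one_{\varepsilon_F F}\Vert ,$$
which I would prove by dividing the coefficients by $\max_{n\in F}|\xx_n^*(x)|$, expanding each resulting number in binary, so that the left side becomes $\bigl(\max_{n\in F}|\xx_n^*(x)|\bigr)\sum_{j\ge1}2^{-j}\one_{\varepsilon G_j}$ with $G_j\subseteq F$, and then summing the geometric series by means of P3) and the subset estimate $\Vert\one_{\varepsilon G_j}\Vert\lesssim\Vert\one_{\varepsilon_F F}\Vert$; taking $F=B\setminus A$ gives $\Vert P_{B\setminus A}(x)\Vert\lesssim\alpha\Vert\one_{\varepsilon_F F}\Vert$. \emph{(b)} Since $|B\setminus A|\le|A\setminus B|$ (because $|B|\le m=|A|$), super-democracy \eqref{seqdem} gives $\alpha\Vert\one_{\varepsilon_F F}\Vert\le\Delta_s\,\alpha\Vert\one_{\eta(A\setminus B)}\Vert$, where $\eta$ denotes the signs of $x$ on $A\setminus B$. \emph{(c)} The truncation lemma for quasi-greedy bases, applied to $y$ and its greedy set $A\setminus B$, gives $\bigl(\min_{n\in A\setminus B}|\xx_n^*(x)|\bigr)\Vert\one_{\eta(A\setminus B)}\Vert\lesssim\Vert y\Vert$; since that minimum is $\ge\alpha$, this yields $\alpha\Vert\one_{\eta(A\setminus B)}\Vert\lesssim\Vert y\Vert=\Vert x-P_B(x)\Vert$. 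Chaining (a)--(c), $\Vert P_{B\setminus A}(x)\Vert\lesssim\Vert x-P_B(x)\Vert$, and then P3) gives $\Vert x-P_A(x)\Vert\le C\Vert x-P_B(x)\Vert$ with $C=C(C_q,\Delta_s,p)$; as $B$ was arbitrary, $\mathcal B$ is $C$-almost-greedy.

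The main obstacle is precisely the bound for $P_{B\setminus A}(x)$. In a Banach space the lattice bound (a) is a one-line convexity/averaging argument, but in a $p$-Banach space averaging can inflate norms, so (a) must be obtained by the binary-expansion and geometric-series device above, and then threaded carefully through super-democracy and the truncation lemma while keeping track of which signs and which threshold are in play. The truncation lemma itself --- that on a $C_q$-quasi-greedy basis $\alpha\Vert\one_{\varepsilon_\Lambda\Lambda}\Vert\lesssim\Vert x\Vert$ whenever $\Lambda$ is a greedy set of $x$ with threshold $\alpha$ --- is the one external ingredient I would cite rather than reprove, being a standard structural fact about quasi-greedy bases in the quasi-Banach setting (see \cite{AABW}).
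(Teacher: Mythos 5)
The paper does not prove this theorem: it is stated with a citation to \cite{DKKT,AABW} and then used as a black box (e.g.\ for the implication $3)\Rightarrow 1)$ in the new characterization via $\SX_d$), so there is no internal argument to compare yours against. What you have written is, in substance, the standard proof from those references transplanted to the $p$-Banach setting, and it is correct: the easy implications, the decomposition $x-P_A(x)=(x-P_B(x))-P_{A\setminus B}(x)+P_{B\setminus A}(x)$, the observation that $A\setminus B$ is a greedy set of $x-P_B(x)$, and the chain ``lattice upper bound $\to$ super-democracy $\to$ truncation (lower) bound'' for $P_{B\setminus A}(x)$ are exactly the mechanism in \cite{DKKT} and \cite{AABW}, with the binary-expansion/geometric-series device correctly replacing the convexity argument that fails for $p<1$. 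Two small caveats. First, the truncation lemma you cite is a genuine external ingredient, but since the theorem itself is attributed to \cite{AABW} this is a reasonable thing to import rather than reprove. Second, your sign-invariance argument in $(2)\Rightarrow(3)$, splitting $A$ into $A^{+}$ and $A^{-}$, works verbatim only for real scalars; the paper allows $\mathbb F=\mathbb C$, where $\mathcal E_A$ consists of unimodular complex numbers, so you would need either a four-way split into real/imaginary and positive/negative parts or the approximation argument of \cite{AABW} to get $\Vert\one_{\varepsilon A}\Vert\approx\Vert\one_A\Vert$ in full generality. Neither point is a structural gap.
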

	
	\begin{example}
		The example constructed in \ref{ex1} is also democratic since
		$$\left(\sum_{n\in A} 1\right)^{1/2}=\vert A\vert^{1/2},$$
		and
		$$\sum_{j\in A}\dfrac{1}{\sqrt{j}}\approx \sqrt{\vert A\vert},$$
		so
		$$\Vert \one_A\Vert \approx \sqrt{\vert A\vert},\; \forall \vert A\vert<\infty.$$
	\end{example}
	
	\begin{example}
		To construct now one example of a quasi-greedy and non-democratic basis we also use Example \ref{ex1} but with the following considerations.
		
		We write $\SX\oplus\mathbb Y$ for the Cartesian product of the Banach spaces $\SX$ and $\mathbb Y$ endowed with the norm
		$$\Vert (f,g)\Vert=\max\lbrace\Vert f\Vert,\Vert g\Vert\rbrace,\; f\in\SX, g\in\mathbb Y.$$ 
		Now, given a basis $\mathcal B_1=(\xx_n)_{n\in\mathbb N}$ in $\SX$ and a basis $\mathcal B_2=(\mathbf y_n)_{n\in\mathbb N}$ in the space $\mathbb Y$, the direct sum basis $\mathcal B_1\oplus\mathcal B_2=(\mathbf u_n)_{n\in\mathbb N}$ in  $\SX\oplus\mathbb Y$  is given by
		$$\mathbf u_{2n-1}=(\xx_n,0),\; \mathbf u_{2n}=(0,\mathbf y_n),\; n\in\mathbb N.$$
		Hence, consideraring $\SX$ the space of the Example \ref{ex1} and $\mathbb Y=c_0$, we can take the natural canonical basis $(\mathbf u_n)_{n\in\mathbb N}$ in $\SX\oplus\mathbb Y$. Then, the basis is quasi-greedy since  due to is quasi-greedy in both spaces: consider $f\in\mathbb X\oplus\mathbb Y$, then
		$$x\sim \sum_{n=1}^\infty \mathbf u_n^*(x)\mathbf u_n=\left(\sum_{n=1}^\infty \mathbf x_{2n}^*(x)\mathbf x_{2n},\sum_{n=1}^\infty \mathbf y_{2n-1}^*(x)\mathbf y_{2n-1}\right).$$
		If $G$ is a greedy set of $x\in\SX\oplus\mathbb Y$, decomposing $G=G_{1,\mathbb X}\cup G_{2,\mathbb Y}$ the corresponding greedy sets in the spaces $\mathbb X$ and $\mathbb Y$,
		$$\Vert P_G(x)\Vert_{\SX\oplus\mathbb Y}=\max\lbrace \Vert P_{G_{1,\mathbb X}}(x)\Vert_{\SX},\Vert P_{G_{2,\mathbb Y}}(x)\Vert_{\mathbb Y}\rbrace\lesssim \max\lbrace \Vert x\Vert_{\SX},\Vert x\Vert_{\mathbb Y}\rbrace=\Vert x\Vert_{\SX\oplus\mathbb Y}.$$
		Now, we can take $A=\lbrace 1,3,\dots,2m-1\rbrace$ and $B=\lbrace 2,4,\dots, 2m\rbrace$. Hence,
		$$\Vert \one_A\Vert_{\SX\oplus\mathbb Y}=\left\Vert\sum_{j=1}^{m} \xx_j\right\Vert_{\SX}=\sqrt{m},$$
		and
		$$\Vert \one_B\Vert_{\SX\oplus\mathbb Y}=\left\Vert\sum_{j=1}^{m} \mathbf y_j\right\Vert_{c_0}=1.$$
		Thus,
		$$\dfrac{\Vert\one_A\Vert_{\SX\oplus\mathbb Y}}{\Vert\one_B\Vert_{\SX\oplus\mathbb Y}}=\sqrt{m},$$
		so the basis is not democratic.
	\end{example}
	
	As a novelty, since we have not found a similar result in the literature, we will provide a characterization of almost-greedy bases where we will use elements in the class $\A$. For a quasi-Banach space $\SX$ and a basis $\mathcal B=(\xx_n)_{n\in\mathbb N}$ in $\SX$, we define the set
	$$\A:=\lbrace x\in\SX : \vert \xx_n^*(x)\vert\neq\vert\xx_j^*(x)\vert\; \forall n\neq j, n,j\in\supp(x)\rbrace.$$
	The advantage of working on this set $\A$ is that the greedy sums are unique.
	\begin{theorem}
		Let $\mathcal B$ be a Markushevich basis in a $p$-Banach space $\SX$ with $0<p\leq 1$. The following are equivalent:
		\begin{enumerate}
			\item[1)] $\mathcal B$ is almost-greedy.
			\item[2)] $\mathcal B$ is almost-greedy for elements $x\in\mathbb X_d$.
			\item[3)] $\mathcal B$ is quasi-greedy and democratic.
		\end{enumerate}
	\end{theorem}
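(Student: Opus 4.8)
The plan is to establish the cycle of implications $1)\Rightarrow 2)\Rightarrow 3)\Rightarrow 1)$, noting that $1)\Leftrightarrow 3)$ is already Theorem \ref{thalmost}, so the only genuinely new content is $2)\Rightarrow 3)$ (since $1)\Rightarrow 2)$ is trivial: the estimate \eqref{defalmost} for all $x\in\SX$ in particular holds for $x\in\SX_d$). Thus the heart of the matter is: assuming the almost-greedy inequality holds only for vectors with pairwise-distinct coefficient moduli, deduce that $\mathcal B$ is quasi-greedy and democratic; then Theorem \ref{thalmost} closes the loop and in fact gives $1)$ for free.

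First I would prove quasi-greediness. Fix $x\in\SX$ and a finite greedy set $A$ of $x$. I want to compare $\Vert x-P_A(x)\Vert$ with $\Vert x\Vert$. The obstruction is that $x$ may have repeated coefficient moduli, so $A$ need not be ``canonical'' and, more importantly, $x\notin\SX_d$ in general. The fix is a perturbation argument: replace $x$ by $x_\delta := x + \sum_{n\in\supp(x)\cup A}\delta_n\xx_n$ where the $\delta_n$ are chosen tiny and generic so that $x_\delta\in\SX_d$, the set $A$ is still a greedy set of $x_\delta$, and the quantities $\Vert x_\delta - P_A(x_\delta)\Vert$, $\Vert x_\delta\Vert$ are within a factor $(1+\e)$ of $\Vert x-P_A(x)\Vert$, $\Vert x\Vert$ (here one uses the $p$-triangle inequality P3) together with semi-normalization to control $\Vert\sum\delta_n\xx_n\Vert$; one must be slightly careful that $\supp(x)$ may be infinite, but since $\xx_n^*(x)\to 0$ only finitely many coordinates are ``large'', and perturbing only finitely many coordinates of $x$ suffices to break ties among the relevant ones while an arbitrarily small tail perturbation handles the rest — or, cleaner, one first reduces to finitely supported $x$ by density and continuity of $P_A$ relative to $\Vert\cdot\Vert$ on a fixed coordinate set. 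Applying hypothesis $2)$ to $x_\delta$ with $B=\emptyset$ gives $\Vert x_\delta - P_A(x_\delta)\Vert \le C_{al}\Vert x_\delta\Vert$, and letting $\e\to 0$ yields $\Vert x-P_A(x)\Vert\le C_{al}\Vert x\Vert$, i.e. quasi-greediness with constant essentially $C_{al}$.

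Next I would prove democracy. Given finite sets $A,B$ with $\vert A\vert=\vert B\vert=m$ (the case $\vert A\vert\le\vert B\vert$ reduces to this by monotonicity once equal-cardinality democracy is known, or one argues directly), I want $\Vert\one_A\Vert\lesssim\Vert\one_B\Vert$. The standard trick is to build a single vector whose greedy set of order $m$ can be taken to be $A$ while $B$ appears as the support of an admissible competitor projection. Concretely, consider $x := \one_A + (1-\e)\one_{B\setminus A}$ (assuming first $A\cap B=\emptyset$; the general case follows by splitting off $A\cap B$), perturbed as above into $\SX_d$ without disturbing the ordering, so that $A$ is a greedy set of $x$ of order $m$ and $\sigma$-type competitor $P_{B}(x)$ — actually $P_{A\cap(\text{something})}$ — realizes an approximation by a projection onto a set of size $\le m$. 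Then $\Vert x - P_A(x)\Vert = (1-\e)\Vert\one_{B\setminus A}\Vert$ roughly, while $\inf_{\vert D\vert\le m}\Vert x-P_D(x)\Vert \le \Vert x - P_B(x)\Vert$; expanding and using the $p$-triangle inequality plus semi-normalization, this infimum is $\gtrsim \Vert\one_A\Vert$ up to controllable error terms and the factor $\e$, giving $\Vert\one_A\Vert\lesssim\Vert\one_B\Vert$ after sending $\e\to 0$. Here quasi-greediness, already secured in the previous step, is what lets me manipulate the projections freely (e.g. to pass between $P_D(x)$ and $\one$-type expressions with uniformly bounded constants via the standard lemma that quasi-greedy bases are ``unconditional for constant coefficients'').

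The main obstacle is the perturbation bookkeeping: one has to verify that an arbitrarily small, generic perturbation can (i) be placed so that the prescribed set $A$ remains a legitimate greedy set of the perturbed vector, (ii) push the vector into $\SX_d$, and (iii) change all the relevant norms by at most a factor tending to $1$ — simultaneously, and with $\supp(x)$ possibly infinite. I expect this to be routine but tedious; the clean way to present it is to isolate a single lemma: ``If $\mathcal B$ satisfies \eqref{defalmost} for all $x\in\SX_d$, then it satisfies \eqref{defalmost} for all $x\in\SX$ with the same constant,'' proved by the $\e$-perturbation argument above, after which $1)\Leftrightarrow 2)$ is immediate and $1)\Leftrightarrow 3)$ is Theorem \ref{thalmost}. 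Once that lemma is in place the remaining implications are formal.
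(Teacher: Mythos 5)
Your proposal follows essentially the same route as the paper: the only new content is $2)\Rightarrow 3)$, proved by perturbing a finitely supported $x$ into $\SX_d$ with arbitrarily small generic coordinates that preserve the greedy set, applying the hypothesis (with empty competitor for quasi-greediness, and with a two-block test vector whose coefficient levels are slightly separated for democracy), letting the perturbation tend to $0$ via the $p$-triangle inequality and semi-normalization, and invoking the density result for finitely supported vectors together with Theorem \ref{thalmost} to close the cycle. The one slip is in your democracy step: with $x=\one_A+(1-\e)\one_B$ the almost-greedy inequality reads $\|x-P_A(x)\|\le C\|x-P_B(x)\|$, i.e.\ $\|\one_B\|\lesssim\|\one_A\|$ (the block carrying the larger coefficients dominates), not the $\gtrsim$ you wrote — for equal cardinalities you recover the claim by swapping the roles of $A$ and $B$, whereas the paper sidesteps this by placing the larger coefficients on the larger set $B$ so that $\|\one_A\|\le C\|\one_B\|$ comes out directly for $|A|\le|B|$.
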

	
	\begin{Proof}
		Of course, 1) implies 2). Now, assume that we are in the condition 2), that is, for all $x\in\mathbb X_d$, there is $C>0$ such that
		$$\Vert x-P_G(x)\Vert\leq C\inf_{\vert B\vert\leq\vert G\vert}\Vert x-P_B(x)\Vert.$$
		First, to show 3), we prove quasi-greediness. Take $x\in\mathbb X$ with finite support ande define $m:=\vert \text{supp}(x)\vert$. Take $\varepsilon>0$ and a sequence $(\varepsilon_i)_{i=1}^m$ such that $\varepsilon>\varepsilon_1>\varepsilon_2>...$ and define the element
		$$f':=\sum_{n\in \text{supp}(x)}(\mathbf x_n^*(x)+\text{sign}(\mathbf x_n^*(x))\varepsilon_n^{1/p})\mathbf x_n.$$
		Of course, if $G$ is a greedy set for $x$ it is also greedy for $f'$. Hence,  
		
		\begin{eqnarray}\label{qg1}
			\nonumber\Vert x-P_G(x)\Vert^p &\leq& \Vert x-f'\Vert^p +\Vert f'-P_G(f')\Vert^p + \Vert P_G(f')-P_G(x)\Vert^p\\
			\nonumber&\leq& (1+\Vert P_G\Vert^p)\Vert f'-x\Vert^p+ C^p\Vert f'\Vert^p\\
			&\leq& (2+\Vert P_G\Vert^p)\Vert f'-x\Vert^p+C\Vert x\Vert^p
		\end{eqnarray}
		Now, we can estimate $\Vert f'-x\Vert^p$ as follows:
		
		\begin{eqnarray}\label{qg2}
			\Vert f'-x\Vert^p&=&\left\Vert \sum_{n\in \text{supp}(x)}\text{sign}(\mathbf x_n^*(x))\varepsilon_n^{1/p}\mathbf x_n\right\Vert^p\leq \varepsilon\vert \text{supp}(x)\vert\mathbf c_2^p 
		\end{eqnarray}
		
		Adding up \eqref{qg2} to \eqref{qg1}, we obtain
		$$\Vert x-P_G(x)\Vert^p\leq C\Vert x\Vert^p+\varepsilon\vert \text{supp}(x)\vert\mathbf c_2^p (2+\Vert P_G\Vert^p).$$
		Taking now $\varepsilon\rightarrow 0$, we obtain quasi-greediness for elements with finite support and, applying the result of density \cite[Corollary 7.3]{BBquasi}, the basis is quasi-greedy for every element $x\in\mathbb X$.
		
		We prove now democracy. For that, take $A, B\subset\mathbb N$ two disjoint finite sets such that $\vert A\vert \leq \vert B\vert$. Take $\varepsilon>0$ and define the element
		$$x:=\sum_{n\in A}(1-\varepsilon^n)\mathbf{x}_n+\sum_{n\in B}(1+\varepsilon^n)\mathbf x_n.$$
		Hence,
		\begin{eqnarray}\label{demlabel1}
			\nonumber\Vert \sum_{n\in A}(1-\varepsilon^{n/p})\mathbf{x}_n\Vert&=&\Vert x-\mathcal G_{\vert B\vert}(x)\Vert\leq C\Vert x-\sum_{n\in A}(1-\varepsilon^{n/p})\mathbf{x}_n\Vert\\
			&=&C\Vert \sum_{n\in B}(1+\varepsilon^n)\mathbf{x}_n\Vert
		\end{eqnarray}
		Applying now the $p$-power,
		\begin{eqnarray}\label{demlabel2}
			\Vert \sum_{n\in A}(1-\varepsilon^{n/p})\mathbf{x}_n\Vert^p\geq \Vert \mathbf 1_A\Vert^p-\Vert \sum_{n\in A}\varepsilon^{n/p}\mathbf{x}_n\Vert^p,
		\end{eqnarray}
		and
		\begin{eqnarray}\label{demlabel3}
			\Vert \sum_{n\in B}(1+\varepsilon^{n/p})\mathbf{x}_n\Vert^p\leq \Vert \mathbf 1_B\Vert^p+\Vert \sum_{n\in B}\varepsilon^{n/p}\mathbf{x}_n\Vert^p.
		\end{eqnarray}
		Putting \eqref{demlabel2} and \eqref{demlabel3} in \eqref{demlabel1}, we obtain
		\begin{eqnarray}\label{demlabel4}
			\Vert \mathbf 1_A\Vert^p\leq C^p\Vert \mathbf 1_B\Vert^p + C^p\Vert \sum_{n\in B}\varepsilon^{n/p}\mathbf{x}_n\Vert^p+\Vert \sum_{n\in A}\varepsilon^{n/p}\mathbf{x}_n\Vert^p.
		\end{eqnarray}
		Now, since
		$$\Vert\sum_{n\in B}\varepsilon^{n/p} \mathbf x_n\Vert^p \leq \varepsilon \mathbf{c}_2^p\vert B\vert,$$
		and
		$$\Vert\sum_{n\in A}\varepsilon^{n/p} \mathbf x_n\Vert^p \leq \varepsilon \mathbf{c}_2^p\vert A\vert.$$
		Applying these last inequalities in \eqref{demlabel4},
		$$\Vert\mathbf 1_A\Vert^p \leq C^p\Vert \mathbf 1_B\Vert^p+\varepsilon\mathbf c_2^p\vert B\vert(C^p+1).$$
		Taking now $\varepsilon\rightarrow 0$, we obtain the the basis is democratic for disjoint sets with constant $C$. Taking now $A, B$ two general sets with $\vert A\vert\leq \vert B\vert$ and other set $D> A\cup B$ such that $\vert A\vert=\vert D\vert$,
		$$\dfrac{\Vert \one_A\Vert}{\Vert \one_B\Vert}=\dfrac{\Vert \one_A\Vert}{\Vert \one_C\Vert}\dfrac{\Vert \one_C\Vert}{\Vert \one_B\Vert}\leq C^2,$$
		so the basis is democratic and 3) is proved. The implication 3) $\Rightarrow$ 1) is the main characterization of almost-greediness (Theorem \ref{thalmost}).
	\end{Proof}

	In \cite{DKK2003}, the authors study, in some sense, the ``distance" between greedy and almost-greedy bases as follows.
	
	\begin{theorem}[\cite{AABW,DKK2003}]
		Let $\mathcal{B}$ be a Markushevich basis in a quasi-Banach space $\mathbb{X}$. Then the basis is almost-greedy if and only if for every (or for some) $\lambda > 1$, there exists a constant $C_\lambda$ such that for every $m \in \mathbb{N}$ and every greedy set $A$ of $x$ with cardinality $\lceil \lambda m \rceil$,
		$$\Vert x-P_A(x)\Vert \leq C_\lambda \sigma_m(x).$$
		Moreover, if $\mathbb{X}$ is a $p$-Banach space, the optimal constant $C_\lambda$ in the previous inequality satisfies
		$$C_\lambda \lesssim  \lceil(\lambda-1)^{-1}\rceil^{1/p}.$$ 
	\end{theorem}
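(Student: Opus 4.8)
The plan is to reduce everything to Theorem~\ref{thalmost}: since almost-greediness is equivalent to being quasi-greedy and democratic, it suffices to show that the displayed inequality $\|x-P_A(x)\|\le C_\lambda\sigma_m(x)$ (for one, equivalently all, $\lambda>1$) is itself equivalent to quasi-greediness together with democracy. Concretely I would prove three things — that quasi-greedy plus democratic forces the inequality for every $\lambda>1$ with the quantitative constant, and that the inequality for a single $\lambda>1$ forces both quasi-greediness and democracy — which closes the circle via Theorem~\ref{thalmost}. Working (after an Aoki--Rolewicz renorming if necessary) in a $p$-Banach space, I would lean throughout on the two companion estimates valid for a quasi-greedy democratic basis: the lower estimate $\alpha\,\varphi(|F|)\lesssim\|x\|$ whenever $|\xx_n^*(x)|\ge\alpha$ for all $n\in F$, and the upper estimate $\|\sum_{n\in F}c_n\xx_n\|\lesssim(\max_n|c_n|)\,\varphi(|F|)$, where $\varphi(j)=\|\one_{\{1,\dots,j\}}\|$ is the (doubling) fundamental function; both follow from quasi-greediness and democracy.

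For the substantial implication, fix $\lambda>1$, $m\in\mathbb N$, $x\in\mathbb X$ and a greedy set $A$ of $x$ with $|A|=\lceil\lambda m\rceil=:N$, and choose $z=\sum_{n\in B}b_n\xx_n$, $|B|\le m$, with $\|x-z\|\le 2\sigma_m(x)$; put $k:=N-|B|\ge\lceil(\lambda-1)m\rceil$. First I would bound $\sigma_m(x)$ from below: since $z$ can alter at most $m$ of the $N$ largest coefficients of $x$, at least $k$ indices of $A$ lie outside $B$, and on those indices $x-z$ coincides with $x$ and has coefficients of modulus $\ge t:=\min_{n\in A}|\xx_n^*(x)|$; the lower estimate then gives $\sigma_m(x)\gtrsim t\,\varphi(k)$, and running the same argument at every threshold $\alpha$ yields a weak-Lorentz-type lower bound for $\sigma_m(x)$. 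Next I would bound $\|x-P_A(x)\|$ from above: fix a greedy ordering $\pi$ of $x$ and split the greedy remainder $x-P_A(x)=\sum_{i>N}\xx^*_{\pi(i)}(x)\xx_{\pi(i)}$ into consecutive blocks of length $k$; $p$-convexity turns the norm into a constant times the $\ell_p$-sum of the block norms, each block is controlled by the upper estimate and by the fact that its coefficients are dominated by those of the preceding block, and a Hardy/telescoping computation collapses the sum to at most $\lceil(\lambda-1)^{-1}\rceil^{1/p}$ times the lower bound for $\sigma_m(x)$ obtained above. (An alternative route feeds into the almost-greedy inequality the set $B'=B\cup E$, with $E$ a greedy set of $x$ off $B$ of cardinality $N-|B|$, reducing matters to an estimate on $\|x-P_{B'}(x)\|$, which is a genuine greedy-type remainder of $x-z$; but one still has to absorb the part of that remainder living on $B$ — at most $m$ coordinates of modulus $<t$ — via the upper estimate and the doubling of $\varphi$, and this is exactly where the factor $\lceil(\lambda-1)^{-1}\rceil$ is produced.)

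For the converse, assume the inequality for some fixed $\lambda>1$. Quasi-greediness comes from the trivial approximant $z=0$: since $\sigma_m(x)\le\|x\|$ one gets $\|x-\mathcal G_{\lceil\lambda m\rceil}(x)\|\le C_\lambda\|x\|$, and a routine passage from the cardinalities $\lceil\lambda m\rceil$ to all cardinalities (any $n$ lies within $\lceil\lambda\rceil$ of some admissible $N\le n$, and the missing at most $\lceil\lambda\rceil$ coefficients, each of norm $\lesssim\|x\|$, are absorbed by $p$-convexity) upgrades this to the quasi-greedy inequality. Democracy I would obtain by testing the hypothesis on vectors whose coefficient moduli occupy two or three well-separated levels, chosen so that a prescribed set of the required cardinality is a greedy set while $x$ minus the greedy sum is, up to a vanishing perturbation, $\one_A$ and the $\sigma_m$-term is comparable to $\one_B$; letting the separation parameters tend to their limits and invoking the doubling of $\varphi$ then gives democracy. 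With quasi-greediness and democracy in hand, Theorem~\ref{thalmost} yields almost-greediness, completing the equivalence.

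The step I expect to be hardest is the upper bound on the greedy remainder in the forward direction: because projections onto arbitrary finite sets are \emph{not} uniformly bounded for a merely quasi-greedy basis, one cannot simply remove $\operatorname{supp}(z)$ from $x$, so the remainder has to be dismantled either through the block decomposition and the Lorentz-type comparison with $\sigma_m(x)$, or in such a way that every piece actually estimated is a bona fide greedy-type remainder of $x-z$ (to which quasi-greediness \emph{does} apply). The sharp constant $\lceil(\lambda-1)^{-1}\rceil^{1/p}$ then emerges from the number $\approx\lambda/(\lambda-1)$ of length-$k$ blocks, combined with the $p$-triangle inequality and the doubling of the fundamental function.
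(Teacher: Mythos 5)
First, a caveat: the paper does not actually prove this statement---it is quoted directly from \cite{AABW,DKK2003}---so I am measuring your proposal against the argument in those references. Your overall architecture is the standard and correct one: reduce everything to Theorem~\ref{thalmost}, derive the quantitative inequality from quasi-greediness plus democracy, and recover both properties from the inequality for a single $\lambda>1$. Your converse direction (taking $z=0$ for quasi-greediness, two-level test vectors for democracy) is sound, as is your lower bound $\sigma_m(x)\gtrsim t\,\varphi(k)$ with $t=\min_{n\in A}|\xx_n^*(x)|$ and $k=|A|-|B|\ge\lceil(\lambda-1)m\rceil$.

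The genuine gap is in the route you present as your \emph{main} one for bounding $\|x-P_A(x)\|$ from above. Writing $a_i=|\xx_{\pi(i)}^*(x)|$ and $N=\lceil\lambda m\rceil$, your block decomposition bounds block $j$ by $a_{N+(j-1)k+1}\,\varphi(k)$ and sums $p$-th powers; inserting the threshold lower bounds $a_{N+(j-1)k}\lesssim\sigma_m(x)/\varphi(jk)$ leaves you with $\sigma_m(x)^p\sum_{j\ge1}\bigl(\varphi(k)/\varphi(jk)\bigr)^p$, and this series diverges whenever the fundamental function grows slowly---already for the canonical bases of $c_0$ or of $\ell_q$ with $q\ge p$, which are greedy and hence must be covered. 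The deeper obstruction is that for a merely quasi-greedy basis the Lorentz-type quantity governing such block sums is not dominated by $\|x\|$, so no rearrangement of that computation can work. Your parenthetical ``alternative route'' is the actual proof and should be promoted to the main line: with $\Gamma$ a greedy set of $x-z$ containing $A\setminus B$ (namely $\Gamma=(A\setminus B)\cup\{n\in B:|\xx_n^*(x-z)|\ge t\}$), one has $x-P_A(x)=\bigl((x-z)-P_\Gamma(x-z)\bigr)+w$, where the first summand is $\lesssim\|x-z\|\approx\sigma_m(x)$ by quasi-greediness applied to the genuine greedy remainder of $x-z$, and the correction $w$ is supported on at most $|B|\le m$ coordinates, each of modulus at most $2t$; hence $\|w\|\lesssim t\,\varphi(m)\lesssim\lceil m/k\rceil^{1/p}\,t\,\varphi(k)\lesssim\lceil(\lambda-1)^{-1}\rceil^{1/p}\sigma_m(x)$, which is exactly where the stated constant is produced, as you correctly anticipated.
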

	
	As we can see from the theorem, the fact that a basis is almost-greedy allows us to recover the best approximation error $\sigma_m(x)$, but by taking larger greedy sums. However, naturally, we would like to take larger greedy sums but with $\lambda \rightarrow 1$ but, in this case, the constant $C_\lambda$ explodes. Therefore, in order to see if it is possible to somehow recover the best approximation error through almost-greedy bases, S. J. Dilworth, N. J. Kalton, and D. Kutzarova propose the following modification of the TGA: let $x \in \mathbb{X}$, and consider a finite greedy set $A$ of $x$. We define a Chebyshev sum of order $n := \vert A \vert$ of $x$ as any element $\mathcal{CG}_n[\mathcal{B},\mathbb{X}](x) = \mathcal{CG}_n(x)$ of the form $\sum_{j \in A} a_j \xx_j \in \text{span}\{\xx_i : i \in A\}$ such that
	$$\Vert x- \mathcal{CG}_n(x) \Vert = \min \left\lbrace \left\Vert x- \sum_{i \in A} a_i \xx_i \right\Vert : a_i \in \mathbb{F}, \forall i \in A \right\rbrace.$$
	
	\begin{definition}
		Let $\mathcal{B}$ be a Markushevich basis in a quasi-Banach space $\mathbb X$. We say that the basis is semi-greedy if for every $x \in \mathbb{X}$, there exists a constant $C$ such that
		\begin{eqnarray*}\label{sec0-semi}
			\min \left\lbrace \left\Vert x- \sum_{n \in A} a_n \xx_n \right\Vert : a_n \in \mathbb{F}, \forall n \in A \right\rbrace \leq C \sigma_{\vert A \vert}(x),
		\end{eqnarray*}
		whenever $A$ is any finite greedy set of $x$. The smallest constant verifying inequality \eqref{sec0-semi} is denoted by $C_{sg}[\mathcal B,\mathbb X]=C_{sg}$, and we say that the basis $\mathcal{B}$ is $C_{sg}$-semi-greedy.
	\end{definition}
	
	In \cite{DKK2003}, the authors proved the following result.
	
	\begin{theorem}
		Let $\mathcal B$ be a Schauder basis in a Banach space $\mathbb X$ with finity cotype. Then, the basis is almost-greedy if and only if the basis is semi-greedy.
	\end{theorem}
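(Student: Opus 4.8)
The plan is to establish the two implications separately, in each case reducing to the characterisation of almost-greedy bases as precisely the quasi-greedy democratic ones (Theorem~\ref{thalmost}); the cotype hypothesis will be needed for only one of them.

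First I would treat the implication ``almost-greedy $\Rightarrow$ semi-greedy'', which in fact needs no restriction on $\SX$. Fix $x\in\SX$ and a finite greedy set $A$ of $x$ with $|A|=m$, and choose $y=\sum_{n\in B}b_n\xx_n$ with $|B|\le m$ and $\|x-y\|\le 2\sigma_m(x)$. Writing $D:=B\setminus A$ (so $|D|\le|A\setminus B|$), the idea is to build a Chebyshev sum on $A$ out of $P_{A\cap B}(y)$ together with an element supported on $A\setminus B$ that ``transplants'' the coefficient pattern of $P_D(x)$ from $D$ onto $A\setminus B$. Since a quasi-greedy democratic basis is \emph{not} unconditional, this transplantation cannot be done by brute force; instead one routes it through the truncation and democracy estimates that underlie Theorem~\ref{thalmost}, which let one move a bounded-coefficient block between disjoint index sets of equal cardinality at the cost of an absolute constant. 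The error so introduced is controlled by $\|x-y\|$ and by $\|P_D(x)\|$; because $A$ is a greedy set of $x$ and $|D|\le m$, the coefficients of $x$ on $D$ are dominated by those on $A$, and one bounds $\|P_D(x)\|$ by a constant multiple of $\|x-P_{A'}(x)\|$ for a suitable greedy set $A'\supseteq A$ with $|A'|=2m$, which by the $\lambda$-version of almost-greediness stated just above (with $\lambda=2$) is $\lesssim\sigma_m(x)$. Collecting the estimates yields a Chebyshev sum on $A$ within $C\sigma_m(x)$, i.e. semi-greediness.

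The substantial direction is ``semi-greedy $\Rightarrow$ almost-greedy'', and here is where I expect the real work and where finite cotype enters. By Theorem~\ref{thalmost} it is enough to show that a semi-greedy Schauder basis is (i)~quasi-greedy and (ii)~democratic. For (i): given $x$ and a greedy set $G$ of $x$, semi-greediness supplies $w\in\mathrm{span}\{\xx_n:n\in G\}$ with $\|x-w\|\le C_{sg}\sigma_{|G|}(x)\le C_{sg}\|x\|$; since $x-w$ has the same coefficients as $x$ off $G$, one has $x-P_G(x)=(x-w)-P_G(x-w)$, and $\|P_G(x-w)\|$ is controlled using the Schauder structure together with a second application of semi-greediness, giving $\|x-P_G(x)\|\lesssim\|x\|$. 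For (ii), the delicate point is the matching lower bound for the fundamental function. A perturbation argument like the one used earlier in this section (inserting decaying terms $\varepsilon^{n}$ to force a prescribed greedy set) turns semi-greediness into a one-sided comparison of the type $\|\one_{B\cup C}\|\lesssim\|\one_B\|$ for disjoint $B,C$ with $\|\one_B\|\approx\|\one_C\|$; to upgrade this to full democracy one invokes that a space of finite cotype $q$ forces the lower fundamental function of the (suppression-)quasi-greedy basis obtained in step~(i) to satisfy $\inf_{|A|=m}\|\one_A\|\gtrsim m^{1/q}$, which together with the sub-additivity of the upper fundamental function pins $\varphi_{\mathrm{lower}}(m)\approx\varphi_{\mathrm{upper}}(m)$. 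Then (i), (ii) and Theorem~\ref{thalmost} give almost-greediness.

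The main obstacle is step~(ii): semi-greediness alone constrains the upper fundamental function only relative to the lower one, and nothing prevents the two from drifting apart; finite cotype is exactly what supplies the missing polynomial lower bound $\|\one_A\|\gtrsim|A|^{1/q}$ that closes the gap. A secondary technical difficulty is the transplantation estimate in the first implication, which must be handled via truncation precisely because quasi-greedy democratic bases need not be unconditional.
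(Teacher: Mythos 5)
The paper itself offers no proof of this statement: it is quoted as a known theorem of Dilworth--Kalton--Kutzarova \cite{DKK2003}, so your attempt can only be measured against the argument in that reference. Your overall architecture --- reduce both directions to the characterisation ``almost-greedy $=$ quasi-greedy $+$ democratic'' of Theorem \ref{thalmost}, and prove ``almost-greedy $\Rightarrow$ semi-greedy'' with no extra hypothesis by assembling a Chebyshev sum on the greedy set from a near-best approximant plus a transplanted block controlled by truncation, democracy and the $\lambda$-version of almost-greediness --- is the right one and matches \cite{DKK2003} in spirit.

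The gap is in step (ii) of the converse. You derive democracy from the conjunction of (a) a one-sided estimate coming from semi-greediness, (b) the cotype lower bound $\inf_{\vert A\vert=m}\Vert\one_A\Vert\gtrsim m^{1/q}$, and (c) subadditivity of the upper fundamental function, claiming these ``pin'' the two fundamental functions together. They do not: (b) and (c) only give $m^{1/q}\lesssim\varphi_{\mathrm{lower}}(m)\leq\varphi_{\mathrm{upper}}(m)\lesssim m$, a sandwich with an unbounded gap for every $q>1$. The canonical basis of $\ell_1\oplus\ell_2$ is unconditional (hence quasi-greedy) in a space of cotype $2$, satisfies (b) and (c), and is not democratic; so democracy must come entirely from the semi-greedy hypothesis, and you have not shown how your one-sided comparison (a) yields it. In \cite{DKK2003} (super)democracy is extracted from semi-greediness alone by a direct perturbation argument, with no cotype; the cotype hypothesis enters later, in the proof of quasi-greediness, precisely through the polynomial lower bound on the fundamental function, which is needed to sum the contributions of the dyadic coefficient blocks. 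That the hypothesis is not structural is confirmed by the paper's own remark that Bern\'a \cite{B2019} later removed it. Your step (i) is also too thin as written: the term $\Vert P_G(x-w)\Vert=\Vert P_G(x)-w\Vert$ is exactly the quantity quasi-greediness is supposed to control, and ``the Schauder structure'' does not bound projections onto greedy sets, which are not intervals; this is where the real work (and the democracy already established) is required.
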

	
	In other words, we can recover the error $\sigma_m(x)$ from almost-greedy bases by using Chebyshev-type greedy sums. The only ``problem'' with this result is that we have the condition of working with spaces of finite cotype, and this condition does not appear in any characterization of other types of greedy-like bases. For this reason, more authors have become interested in these bases. For instance, in \cite{B2019}, the author improved the result of \cite{DKK2003} by eliminating the finite cotype condition but the author also worked under the condition of Schauder bases, but this type of bases were relaxed some years later in the following result.
	
	\begin{theorem}[{\cite{BL}}]
		Let $\mathcal B$ be a Markushevich basis in a Banach space $\mathbb X$. Then, the basis is almost-greedy if and only if the basis is semi-greedy.
	\end{theorem}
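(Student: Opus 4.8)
The plan is to prove the two implications separately, in each case reducing to the characterisation of almost-greedy bases as the quasi-greedy democratic ones (Theorem~\ref{thalmost}) and, for the forward direction, to the enlarged-greedy-sum characterisation of almost-greediness recalled above.

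\emph{Almost-greedy $\Rightarrow$ semi-greedy.} Fix $x\in\mathbb X$ and a greedy set $A$ of $x$ with $\vert A\vert=m$; I must exhibit an element of $\mathrm{span}\{\xx_n:n\in A\}$ that lies within $C\sigma_m(x)$ of $x$. I would start from a near-optimal approximant $z=\sum_{n\in\Lambda}b_n\xx_n$, $\vert\Lambda\vert=m$, with $\Vert x-z\Vert\le(1+\e)\sigma_m(x)$, and transport $z$ to the greedy support: with $E:=A\setminus\Lambda$, $F:=\Lambda\setminus A$ (so $\vert E\vert=\vert F\vert$), set $y:=P_{A\cap\Lambda}(z)+P_E(x)\in\mathrm{span}\{\xx_n:n\in A\}$. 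A short computation gives $x-y=\bigl((x-z)-P_E(x-z)\bigr)+P_F(z)$, so $\Vert x-y\Vert$ is governed by the residual $x-z$, a projection of it off the set $E$, and a projection of $z$ onto the ``excess'' set $F$. Since $A$ is greedy, every coefficient of $x$ on $E\subseteq A$ dominates every coefficient on $F$ and $\vert E\vert=\vert F\vert$, so the democracy estimate for almost-greedy bases controls the $F$-piece by quantities living on $E$ and by $x-z$. The only delicate point is that handling these projections merely through the quasi-greedy constant of a projection onto a set of cardinality $m$ would cost a logarithmic factor in $m$; this is avoided by running the argument against a greedy set of $x-z$ slightly larger than $\Lambda$ and invoking the enlarged-greedy-sum characterisation, which loses only an absolute constant. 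In other words, for an almost-greedy basis the best $m$-term error is, up to a constant, realised on a greedy support, which is exactly semi-greediness.

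\emph{Semi-greedy $\Rightarrow$ almost-greedy.} By Theorem~\ref{thalmost} it suffices to show that a semi-greedy Markushevich basis is quasi-greedy and democratic. For quasi-greediness I would argue as in the proof of the $\mathbb X_d$-characterisation above: given $x$ of finite support and a greedy set $A$ of $x$, perturb $x$ to $x'$ whose nonzero coefficients have pairwise distinct moduli so that $A$ is the unique greedy set of $x'$, apply the semi-greedy inequality to $x'$, compare the resulting Chebyshev sum with $P_A(x')$ inside the finite-dimensional space $\mathrm{span}\{\xx_n:n\in A\}$, let the perturbation tend to $0$, and then upgrade from finitely supported vectors to all of $\mathbb X$ by density. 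For democracy, given finite disjoint sets $\vert A\vert\le\vert B\vert$, test the semi-greedy inequality on a vector assembled from $\one_A$ and $\one_B$ with suitably weighted and mutually incomparable blocks, arranged so that a greedy set of the relevant order is forced to contain $A$; bounding the Chebyshev error from below (using the quasi-greediness just obtained) and $\sigma_{\vert B\vert}$ from above yields $\Vert\one_A\Vert\le C\Vert\one_B\Vert$, and the disjointness and ordering restrictions are then removed by a comparison with a third set disjoint from $A\cup B$, exactly as in the $\mathbb X_d$-characterisation above.

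The main obstacle is the democracy step of the second implication. Naive test vectors yield only a \emph{conservative} form of democracy, i.e.\ $\Vert\one_A\Vert\lesssim\Vert\one_B\Vert$ under an ordering hypothesis on $A$ and $B$; in \cite{DKK2003} the passage from this restricted form to unrestricted democracy used a Rademacher-averaging argument, which is what forced the finite-cotype assumption on $\mathbb X$. Removing that assumption, following \cite{B2019,BL}, requires a more delicate perturbative and combinatorial analysis that exploits the minimality defining the Chebyshev sums, and the move from Schauder to Markushevich bases is an additional complication: one can no longer rely on uniform boundedness of the partial-sum projections, so every estimate must be carried out purely in terms of greedy projections, the coefficient functionals, and the semi-normalisation of the basis.
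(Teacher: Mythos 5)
First, note that the paper contains no proof of this statement to compare against: the theorem is quoted verbatim from \cite{BL}, and the surrounding text only remarks that the argument there relies on a duality with $\mathbb X^{**}$ specific to the Banach setting. Your proposal must therefore be judged against the proofs in the cited literature (\cite{DKK2003,B2019,BL}), whose overall architecture you reproduce correctly; as a proof, however, it has concrete gaps. In the direction almost-greedy $\Rightarrow$ semi-greedy, your identity $x-y=\bigl((x-z)-P_E(x-z)\bigr)+P_F(z)$ is algebraically right, but the term $P_E(x-z)$ cannot be bounded by $C\Vert x-z\Vert$: the set $E=A\setminus\Lambda$ is (part of) a greedy set of $x$, not of $x-z$, and an almost-greedy basis need not be unconditional, so projections onto arbitrary sets of cardinality $m$ are not uniformly bounded. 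Your proposed repair (``run the argument against a greedy set of $x-z$ slightly larger than $\Lambda$'') is not carried out and does not obviously apply, since the enlarged-greedy-sum characterisation controls $x$ minus a greedy projection of $x$ itself, not a projection of $x-z$ onto a prescribed non-greedy set. The standard proofs avoid this entirely by replacing $P_E(x)$ with the truncated vector $\sum_{n\in E}\alpha\,\mathrm{sign}(\xx_n^*(x))\xx_n$ with $\alpha=\min_{n\in A}\vert\xx_n^*(x)\vert$ and invoking the uniform boundedness of the truncation operator for quasi-greedy bases; that lemma is the engine of this implication and is absent from your sketch.

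In the converse direction the two substantive steps are also left open. For quasi-greediness, the semi-greedy hypothesis bounds $\Vert x-\mathcal{CG}_{\vert A\vert}(x)\Vert$, whereas you need $\Vert x-P_A(x)\Vert$; ``comparing the Chebyshev sum with $P_A(x')$ inside the finite-dimensional span'' yields no constant independent of $\vert A\vert$, and bridging this gap (via a second perturbation, or via an intermediate property such as quasi-greediness for largest coefficients) is exactly where the work lies. For democracy you correctly diagnose the difficulty --- that naive test vectors only give a conservative, order-restricted comparison, and that removing the cotype hypothesis of \cite{DKK2003} and the Schauder hypothesis of \cite{B2019} requires a finer perturbative analysis --- but you neither construct the test vectors nor perform that analysis. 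In sum, the proposal is a faithful road map of the known proof, with the three load-bearing ingredients (truncation boundedness, the Chebyshev-to-projection comparison, and unrestricted democracy) identified but not established.
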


	It is clear from this last result that, in the context of Banach spaces, having an almost-greedy basis makes it possible to improve the efficiency of the TGA using Chebyshev sums. The question that now arises is: what about now the characterization in the context of quasi-Banach or $p$-Banach spaces? In \cite{BCH}, the authors showed the equivalence between almost-greedy and semi-greedy bases but under the condition of Schauder bases.
	
	\begin{theorem}
		Let $\mathcal B$ be a Schauder basis in a quasi-Banach space. The basis is semi-greedy if and only if the basis is almost-greedy.
	\end{theorem}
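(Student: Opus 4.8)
I would prove the two implications separately, in each case appealing to the characterization of almost-greediness in Theorem~\ref{thalmost}: a Markushevich basis is almost-greedy if and only if it is quasi-greedy and super-democratic.

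\emph{Almost-greedy $\Rightarrow$ semi-greedy.} This implication does not use that $\mathcal B$ is Schauder. Fix $x\in\SX$ and a finite greedy set $A$ of $x$ with $m:=|A|$, and choose a near-best $m$-term approximant $y=\sum_{n\in B}b_n\xx_n$ with $|B|\le m$ and $\|x-y\|\le 2\sigma_m(x)$. Write $D:=A\setminus B$ and $E:=B\setminus A$; since $|B|\le|A|$ one has $|E|\le|D|$, so there is an injection $E\to D$. The Chebyshev sum supported on $A$ that I would take is $z:=P_{A\cap B}(y)+P_D(x)$ modified by transporting the ``lost'' mass $P_E(y)$ inside $A$ along that injection; the error $x-z$ then reads, up to this transport, as $(x-y)$ plus $P_E(y)$ plus the co-greedy tail $x-P_A(x)$, minus what was re-inserted on $D$. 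Bounding $\|x-z\|$ in the $p$-norm uses: (i) $\|x-y\|\approx\sigma_m(x)$; (ii) almost-greediness to replace $\|x-P_A(x)\|$ (and the piece $P_E(x)$ of it) by a multiple of $\inf_{|B'|\le m}\|x-P_{B'}(x)\|\le\|x-P_B(x)\|$, which is $\lesssim\sigma_m(x)$; (iii) that a quasi-greedy basis is unconditional for constant coefficients and has a bounded truncation operator, so moving coefficients of modulus at most $\min_{k\in A}|\xx_k^*(x)|$ from $E$ to $D$ costs at most a multiple of $\|\one_E\|$, which the tail estimate absorbs; and (iv) super-democracy, $\|\one_E\|\approx\|\one_D\|$. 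Collecting terms gives $\min\{\,\|x-\sum_{n\in A}a_n\xx_n\|\,\}\le\|x-z\|\le C\sigma_m(x)$.

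\emph{Semi-greedy $\Rightarrow$ almost-greedy.} By Theorem~\ref{thalmost} it suffices to show that a semi-greedy Schauder basis is quasi-greedy and democratic; this is where the Schauder hypothesis is essential, entering through the uniform bound $\mathbf{K}:=\sup_N\|S_N\|<\infty$ of the partial-sum projections $S_N$ (hence also $\sup_N\|I-S_N\|<\infty$). For \emph{democracy}: given finite sets with $|A|\le|B|$, introduce an auxiliary set $D$ with $|D|=|B|$ placed entirely to the right of $A\cup B$. Apply semi-greediness to $x:=\one_A+\one_D$ with the greedy set $D$ (all nonzero coefficients are equal, so $D$ is a greedy set of order $|D|$): one gets $z$ supported on $D$ with $\|x-z\|\le C_{sg}\sigma_{|D|}(x)\le C_{sg}\|\one_D\|$ (use the approximant $\one_A$), while $D$ lying past $N:=\max A$ forces $S_N(z)=0$ and $S_N(x-z)=\one_A$, hence $\|\one_A\|\le\mathbf{K}\,C_{sg}\|\one_D\|$. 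Running the same idea with $x':=\one_B+\one_D$, the greedy set $B$, and the tail projection $I-S_{\max B}$ gives $\|\one_D\|\lesssim C_{sg}\|\one_B\|$; chaining, $\|\one_A\|\lesssim\|\one_B\|$. Carrying signs through gives super-democracy.

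\emph{Quasi-greediness} is the main obstacle. One wants $\|x-P_A(x)\|\lesssim\|x\|$ for every finite greedy set $A$ of $x$, equivalently $\|\mathcal G_m(x)\|\lesssim\|x\|$. Semi-greediness gives $z=\sum_{n\in A}a_n\xx_n$ with $\|x-z\|\le C_{sg}\sigma_m(x)\le C_{sg}\|x\|$, hence $\|z\|\lesssim\|x\|$, but $P_A(x)$ and $z$ need not be comparable: projecting $x-z$ back onto the arbitrary finite set $A$ is not a bounded operation for a general Schauder basis, and there is no convexity of $\SX$ to fall back on — this is exactly the point where \cite{DKK2003} invoked finite cotype. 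The plan is to (a) reduce by density \cite[Corollary~7.3]{BBquasi} to finitely supported $x$; (b) perturb its coefficients by $\mathrm{sign}(\xx_n^*(x))\,\varepsilon^{n/p}$, as in the proof of the preceding theorem, so that the greedy set of each order is unique and stable under small changes; and (c) split $\supp(x)\cup A$ into blocks along the Schauder ordering and feed the semi-greedy inequality, applied to $x$ and to finitely many truncations $S_Nx$ and $(I-S_N)x$, into a telescoping estimate whose constants depend only on $\mathbf{K}$ and the $p$-convexity constant of $\SX$; then let $\varepsilon\to 0$ and use continuity of the $p$-norm. Step (c) is where essentially all the work lies, since it is precisely the conversion of Chebyshevian information — control of $\min_a\|x-\sum_{n\in A}a_n\xx_n\|$ only — into control of the genuine greedy projection $P_A$. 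Once quasi-greediness and democracy are established, Theorem~\ref{thalmost} completes the proof.
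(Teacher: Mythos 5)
A preliminary remark: the paper does not actually prove this theorem; it quotes it from \cite{BCH}, so there is no in-paper argument to measure you against, and your proposal has to stand on its own. Two of its three components do. The implication almost-greedy $\Rightarrow$ semi-greedy follows the standard route (near-best approximant $y$ supported on $B$, splitting over $A\cap B$, $A\setminus B$, $B\setminus A$, democracy to compare $\|\one_{B\setminus A}\|$ with $\|\one_{A\setminus B}\|$, and the truncation/constant-coefficient estimates available for quasi-greedy bases), and indeed needs no Schauder hypothesis; one detail you gloss over is that $\|x-P_B(x)\|\lesssim\sigma_m(x)$ is not free, since $P_B$ is not a priori bounded on arbitrary sets, which is why the known proofs route this through the truncation operator rather than through $P_B$. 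Your democracy argument in the converse direction --- testing semi-greediness on $\one_A+\one_D$ and on $\one_B+\one_D$ with $D$ placed beyond $A\cup B$, then isolating $\one_A$ and $\one_D$ with $S_N$ and $I-S_N$ --- is exactly the argument in the literature and is correct.

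The genuine gap is quasi-greediness, and you have in effect conceded it: step (c), ``split $\supp(x)\cup A$ into blocks along the Schauder ordering and feed the semi-greedy inequality into a telescoping estimate,'' is not an argument but a placeholder for the entire difficulty of the theorem. Semi-greediness only ever produces \emph{some} $z\in\operatorname{span}\{\xx_n:n\in A\}$ with $\|x-z\|\lesssim\sigma_{|A|}(x)\le\|x\|$; to conclude $\|x-P_A(x)\|\lesssim\|x\|$ you must pass from $z$ to $P_A(x)$, i.e.\ bound $\|P_A(x-z)\|$, and the uniform boundedness of $P_A$ over greedy sets $A$ is precisely the statement being proved. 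Interleaving with the partial-sum projections $S_N$ does not resolve this by itself, because $A$ is an arbitrary greedy set with no relation to the Schauder ordering, and there is no cotype to fall back on in a quasi-Banach space. The known proofs (\cite{B2019} for Banach spaces, \cite{BCH} in the quasi-Banach setting) overcome this with a concrete construction rather than a generic telescoping: one adjoins an auxiliary flat block $t\one_D$ with $|D|=|A|$ placed beyond $\supp(x)$, where $t=\min_{n\in A}|\xx_n^*(x)|$, applies the semi-greedy inequality to suitably chosen combinations so that the unknown Chebyshev coefficients land on $D$ and can be killed by a single Schauder projection, and then invokes the already-established superdemocracy to convert the resulting bound into control of $P_A(x)$. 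Until you supply the actual estimate in step (c), the implication semi-greedy $\Rightarrow$ quasi-greedy --- the heart of the theorem --- remains unproved.
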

	
	\textbf{Question 3.} Is it possible to remove the condition of Schauder in the characterization of semi-greediness in quasi-Banach spaces?
	
	We want to remark that the proof provided in \cite{BL} for the characterization of almost-greediness with semi-greediness for general Markushevich bases requires one argument that is only valid in the Banach setting using the relation between the space $\mathbb X$ and the bidual one $\mathbb X^{**}$.
	
	\subsection{Isometric case of greedy-like bases}
	
	Related to semi-greediness, we have another open question about the isometric case. To explain that, we give a little review about the constant 1 for quasi-greedy, greedy and almost-greedy bases.
	
	In the first characterization of greedy bases in terms of unconditionality and democracy in Banach spaces given in \cite{KT}, the authors gave the following estimates:
	$$\max\lbrace \Delta_d, K\rbrace\leq C_g\leq K(1+\Delta_d).$$
	Hence, if $\mathcal B$ is 1-democratic and 1-unconditional, the basis is $C_g$-greedy with $C_g\leq 2$ and, in 2006, in \cite{AW}, the authors showed that the constant 2 is optimal. For that reason, the question here is natural: under what conditions is it possible to recover $C_g=1$?  To answer this question, F. Albiac and P. Wojtasczyk introduced the Property (A): given $x=\sum_{n\in S}\xx_n^*(x)\xx_n$ with $\vert S\vert<\infty$, we write $M(x):=\lbrace n\in S : \vert \xx_n^*(x)\vert=\max_{j\in S}\vert\xx_j^*(x)\vert\rbrace$. Hence, the basis has the Property (A) whenever
	$$\Vert x\Vert=\left\Vert \sum_{n\in M(x)}\varepsilon_n\xx_n^*(x)\xx_{\lambda(n)}+P_{M^c(x)}(x)\right\Vert,$$
	for all injective maps $\lambda: S\rightarrow\mathbb N$ such that $\lambda(j)=j$ if $j\not\in M(x)$ and $\varepsilon_n\in\lbrace\pm 1\rbrace$ with $\varepsilon_n=1$ whenever $\lambda(n)=n$ for $n\in M(x)$.
	\begin{theorem}
		A basis in a (real) Banach space is $1$-greedy if and only if the basis is $1$-unconditional and has the Property (A).
	\end{theorem}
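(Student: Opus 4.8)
The plan is to establish the two implications separately, following the template of the unconditionality-and-democracy characterization of greediness but tracking constants carefully so that everything is isometric. For the forward direction, assume $\mathcal B$ is $1$-greedy. First I would show $1$-unconditionality: given a finite set $B$ and $x$ with finite support, one perturbs the coefficients outside $B$ to be strictly larger in modulus than those inside $B$ (as is done in the proof of the preceding $\SX_d$-characterization, using a parameter $\varepsilon^{n/p}$, here with $p=1$), so that $B$ becomes a greedy set of the perturbed element $\tilde x$; applying the $1$-greedy inequality with the trivial competitor $\sum_{n\in B}\xx_n^*(\tilde x)\xx_n$ gives $\Vert \tilde x - P_B(\tilde x)\Vert \le \Vert\tilde x\Vert$, and then letting $\varepsilon\to 0$ and using continuity of the norm yields $\Vert x - P_B(x)\Vert \le \Vert x\Vert$, i.e. $K=1$. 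Next, for Property (A): given $x$ supported on a finite $S$ with $M(x)$ the set of maximizing indices, and given an admissible injection $\lambda$ and signs $\varepsilon_n$, one builds a perturbed element $y$ whose greedy set of the appropriate cardinality is forced to be $M(x)$, applies the $1$-greedy inequality in both directions (comparing $y$ against a competitor supported off the greedy set obtained by moving the top coordinates to the positions $\lambda(n)$ with the signs $\varepsilon_n$), and then passes to the limit to conclude the desired isometric equality. The delicate point is arranging the perturbation so that it does not disturb the coordinates that are already fixed by $\lambda$ and so that the competitor genuinely has support of size at most $\vert M(x)\vert$.

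For the converse, assume $\mathcal B$ is $1$-unconditional and has Property (A). I would argue that these two hypotheses force $1$-democracy: given finite sets $A$ and $B$ with $\vert A\vert \le \vert B\vert$ and any signs, one compares $\one_{\varepsilon A}$ and $\one_{\eta B}$ by first using Property (A) to rearrange the unimodular coordinates freely (all coordinates of $\one_{\varepsilon A}$ are maximal, so $M(\one_{\varepsilon A})$ is the whole support), mapping $A$ inside $B$, and then using $1$-unconditionality to drop the extra coordinates of $B$; this gives $\Vert\one_{\varepsilon A}\Vert \le \Vert\one_{\eta B}\Vert$ with constant exactly $1$. Then one runs the quantitative estimate $C_g \le K(1+\Delta_d)$ from \cite{KT} — but in its sharp isometric form: with $K=1$ and $\Delta_d=1$ one must show $C_g=1$, not merely $C_g\le 2$. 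This is exactly where Property (A) is needed beyond mere $1$-democracy: in the standard estimate the factor $1+\Delta_d$ arises from splitting $x-P_A(x)$ against a near-optimal competitor $z=\sum_{n\in B}b_n\xx_n$ via a triangle inequality and replacing a block of $z$ supported off $A$ by a block supported on $A\setminus B$ of the same cardinality; Property (A) lets one perform this replacement \emph{isometrically} (relocating the maximal-modulus coordinates) rather than up to $\Delta_d$, and $1$-unconditionality handles the remaining projections, collapsing the bound to $1$.

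The main obstacle I anticipate is the second step of the converse — extracting $C_g=1$ rather than $C_g\le 2$. The naive triangle-inequality argument is lossy, so one must instead argue directly: given a greedy set $A$ of $x$ and an arbitrary competitor $z$ with $\vert\supp z\vert \le \vert A\vert$, one wants $\Vert x-P_A(x)\Vert \le \Vert x-z\Vert$. The idea is to write $w:=x-z$, locate where the large coordinates of $x$ sit relative to $\supp z$, and use Property (A) to transport the coordinates of $w$ on $A\setminus\supp z$ onto the "missing" slots of $\supp z \setminus A$ without changing the norm, thereby converting $w$ into an element dominating $x - P_A(x)$ coordinatewise after a sign adjustment; then $1$-unconditionality finishes. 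Making this transport precise — respecting the injectivity and sign constraints built into the definition of Property (A), and handling the generic case where coefficient moduli need not be distinct (which may require a preliminary $\varepsilon$-perturbation and a limiting argument as above) — is the crux. A secondary subtlety throughout is that the statement is restricted to \emph{real} scalars, so I would not need to worry about complex phases, but I should flag that the perturbation-and-limit device relies on the norm being continuous, which holds here since a $1$-Banach space is an honest Banach space.
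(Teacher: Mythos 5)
The paper itself gives no proof of this statement: it is quoted verbatim from \cite{AW}, so there is nothing internal to compare your plan against. Judged on its own merits, your plan identifies the right global structure (two implications, perturbation-and-limit arguments, and the correct observation that the crude bound $C_g\le K(1+\Delta_d)$ only yields $2$), but two steps as written would fail.

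First, in the forward direction your derivation of $1$-(suppression-)unconditionality is internally inconsistent: you perturb the coefficients \emph{outside} $B$ to be strictly larger than those inside $B$ ``so that $B$ becomes a greedy set'' --- with that perturbation the greedy set lies in the complement of $B$, not in $B$. Moreover the competitor you name, $\sum_{n\in B}\xx_n^*(\tilde x)\xx_n=P_B(\tilde x)$, only produces the tautology $\Vert \tilde x-P_B(\tilde x)\Vert\le\Vert\tilde x-P_B(\tilde x)\Vert$. The correct move is to boost the coefficients \emph{inside} $B$ so that $B$ is a greedy set of $\tilde x$ and then test the $1$-greedy inequality against the \emph{zero} competitor, which gives $\Vert\tilde x-P_B(\tilde x)\Vert\le\Vert\tilde x\Vert$ and, after letting the perturbation tend to $0$, the claim. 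This is repairable but, as stated, the step does not go through.

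Second, and more seriously, the crux of the converse is not closed. Your proposed finish --- use Property (A) to transport the coordinates of $w=x-z$ from $A\setminus\supp z$ onto $\supp z\setminus A$, obtain an element dominating $x-P_A(x)$ coordinatewise, and let $1$-unconditionality finish --- breaks at both ends. Property (A) only permits relocating and re-signing coordinates of \emph{maximal} modulus (the set $M(\cdot)$), whereas the coordinates of $w$ on $A\setminus\supp z$ need be neither maximal in $w$ nor of the same modulus as the coordinates of $x$ on $\supp z\setminus A$ they are meant to replace; the known proofs (in \cite{AW} and its later refinements) need an additional truncation/convexity lemma precisely to reduce to the unimodular top-level configuration where Property (A) applies, and that lemma is absent from your plan. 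Furthermore, even granting a coordinatewise domination $\vert\xx_n^*(u)\vert\le\vert\xx_n^*(v)\vert$ for all $n$, the paper's notion of $1$-unconditionality is the suppression form $\Vert x-P_B(x)\Vert\le\Vert x\Vert$: it allows you to delete coordinates, not to shrink them, so it does not yield $\Vert u\Vert\le\Vert v\Vert$. Passing from domination to a norm inequality requires lattice $1$-unconditionality, which is strictly stronger than $K=1$ in the paper's sense. Until these two points are supplied, the implication ``$1$-unconditional $+$ Property (A) $\Rightarrow$ $1$-greedy'' remains unproved in your outline.
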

	Some years later, in \cite{DKOS}, the authors rewrite this property in the following sense: a basis has the Property (A) if
	$$\Vert x+\one_{\boldsymbol{\varepsilon} A}\Vert = \Vert x+\one_{\boldsymbol{\eta} B}\Vert,$$
	whenever $x\in\SX$ with $\vert\xx_n^*(x)\vert\leq 1$ for all $n\in \supp(x)$, $\vert A\vert\leq\vert B\vert<\infty$ with $A\cap B=\emptyset$ and $\supp(x)\cap (A\cup B)=\emptyset$ and $\boldsymbol{\varepsilon}\in\mathcal E_A$, $\boldsymbol{\eta}\in\mathcal E_B$.
	
	In fact, this condition was renamed as symmetry for largest coefficients since is a weaker condition than the usual definition of symmetry. We remind that a basis $(\mathbf x_n)_{n\in\mathbb N}$ is symmetric if it is equivalent to $(\mathbf x_{p(n)})_{n\in\mathbb N}$ for every permutation $p$ on $\mathbb N$, that is, there is $\mathcal S>0$ such that 
	$$\left\Vert \sum_n a_n\mathbf x_{p(n)}\right\Vert\leq\mathcal S\left \Vert \sum_n a_n\mathbf x_{n}\right\Vert,$$
	for any permutation $p$ and any sequence $(a_n)_{n\in\mathbb N}\in c_{00}$. Then, it is obvious that $1$-symmetry implies $1$-unconditional and $1$-democratic but, in fact, in \cite[Theorem 2.5]{AW} it is proved that if the basis is $1$-symmetric, then the basis is $1$-greedy (the converse is false as we can see in \cite{AW}). Then, Property (A) was renamed to ``symmetry for largest coefficients” since we take the only over the largest coefficients (in modulus).

	Related to $1$-almost-greediness, in \cite{AA2}, the authors proved the following characterization.
	
	\begin{theorem}
		A basis is $1$-almost-greedy if and only if the basis has the Property (A).
	\end{theorem}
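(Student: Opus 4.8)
The two implications are proved separately; the forward one --- that a $1$-almost-greedy basis has Property (A) --- is short, so I would do it first. Fix an admissible configuration for Property (A): $x\in\SX$ with $|\xx_n^*(x)|\le 1$ on $\supp(x)$, pairwise disjoint finite sets $A,B$ disjoint from $\supp(x)$ with $|A|\le|B|$, and signs $\varepsilon\in\mathcal E_A$, $\eta\in\mathcal E_B$. Put $f:=x+\one_{\varepsilon A}+\one_{\eta B}$. Every coefficient of $f$ has modulus at most $1$, and those on $A\cup B$ have modulus exactly $1$, so $A$ is a greedy set of $f$ of cardinality $|A|$ with $f-P_A(f)=x+\one_{\eta B}$; when $|A|=|B|$, $B$ is likewise a competitor of that cardinality with $f-P_B(f)=x+\one_{\varepsilon A}$. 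The $1$-almost-greedy inequality for $f$ (greedy set $A$, competitor $P_B$) gives $\Vert x+\one_{\eta B}\Vert\le\Vert x+\one_{\varepsilon A}\Vert$, and exchanging $A$ and $B$ forces equality. For $|A|<|B|$ one combines this equality, applied to $A$ and to a subset $B_1\subseteq B$ with $|B_1|=|A|$, with the instance $D=\emptyset$ of the $1$-almost-greedy inequality applied to $x+\one_{\eta B}$, which gives $\Vert x+\one_{\eta B_1}\Vert\le\Vert x+\one_{\eta B}\Vert$.

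The reverse implication carries essentially all the difficulty. Assume $\mathcal B$ has Property (A). I would first reduce to finitely supported vectors with pairwise distinct nonzero coefficient moduli: the perturbation $f\mapsto\sum_n\bigl(\xx_n^*(f)+\text{sign}(\xx_n^*(f))\,\varepsilon^{n/p}\bigr)\xx_n$ together with continuity of the $p$-norm separates the moduli, and a density argument in the spirit of \cite[Corollary 7.3]{BBquasi} transfers the resulting conclusion (which will include quasi-greediness) from finitely supported vectors to all of $\SX$; for such vectors the greedy set of each order is unique. The task is then to promote Property (A) --- an isometric identity about inserting a block of unit-modulus coefficients on coordinates outside the support of a unit-bounded vector --- to the chain of isometric estimates hidden in $\Vert x-P_A(x)\Vert\le\Vert x-P_B(x)\Vert$ for $A$ a greedy set of $x$ and $|B|\le|A|$. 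The case $x=0$ of Property (A) already gives that $\mathcal B$ is $1$-super-democratic. I would then establish: (i) $1$-suppression quasi-greediness, $\Vert x-P_A(x)\Vert\le\Vert x\Vert$, by using Property (A) repeatedly to relocate and renormalize the block of largest coefficients of $x$ onto fresh coordinates and then passing to a limit, exploiting continuity of the quasi-norm; and (ii) the general competitor case, where iterated $1$-suppression quasi-greediness lets one discard from $x-P_B(x)$ enough of its largest (hence greedy) coefficients to leave a vector that retains, over the common part of $\supp(x)$ lying outside $A\cup B$, only a block supported inside $A\setminus B$, after which comparing that vector with $x-P_A(x)$ --- which over the same part keeps the equal-cardinality but smaller block supported on $B\setminus A$ instead --- is a ``block-replacement'' inequality deduced from Property (A) and $1$-super-democracy. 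Assembling (i) and (ii) yields the $1$-almost-greedy inequality.

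The difficulty is concentrated entirely in the reverse implication, and within it in steps (i) and (ii). Property (A) only controls the insertion of unit-modulus blocks at fresh coordinates, whereas the estimates one truly needs concern the genuine largest coefficients of $x$, which are neither normalized nor, in general, disjoint from or nested inside the competitor set; bridging this gap calls for a delicate interplay of rescalings, chains of applications of Property (A), and limiting arguments relying on the continuity of the quasi-norm. What makes this strictly harder than the isomorphic characterization of Theorem \ref{thalmost} is that every estimate must be performed with constant exactly $1$: none of the routine quasi-triangle-inequality manipulations, harmless up to a multiplicative constant, are available, and this is precisely why the correct hypothesis is the rigid Property (A) rather than $1$-quasi-greediness together with $1$-democracy.
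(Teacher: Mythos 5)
The paper does not actually prove this theorem; it quotes it from \cite{AA2}, so your sketch has to be measured against the argument there. Your forward implication is correct and is essentially the standard one: for $f=x+\one_{\varepsilon A}+\one_{\eta B}$ both $A$ and, when $|A|=|B|$, $B$ are greedy sets of $f$, and the $1$-almost-greedy inequality applied in both directions gives $\Vert x+\one_{\varepsilon A}\Vert=\Vert x+\one_{\eta B}\Vert$; for $|A|<|B|$ you correctly obtain only the one-sided inequality $\Vert x+\one_{\varepsilon A}\Vert\le\Vert x+\one_{\eta B}\Vert$, which is in fact the right formulation of Property (A) (the equality written in the paper for $|A|<|B|$ already fails for the canonical basis of $\ell_1$, which is $1$-greedy).

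The reverse implication is where your proposal has a genuine gap. Property (A) only compares vectors that differ by \emph{unimodular} blocks on fresh coordinates, and the whole content of the proof in \cite{AA2} is the pair of lemmas that bridge this to arbitrary coefficients, both obtained by averaging/convexity: truncating a coefficient $c$ with $|c|\ge t$ to $t\,\mathrm{sign}(c)$ does not increase the norm (write $t\,\mathrm{sign}(c)=\lambda c+(1-\lambda)\cdot 0$ and use $\Vert y\Vert\le\Vert y+c\xx_n\Vert$ from Property (A)), and a block with coefficients of modulus $\le t$ is dominated by a unimodular block of the same or larger cardinality (write it as an average of $\pm t$ and use Property (A) signwise). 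With $y=x-P_{A\cup B}(x)$, $u=P_{B\setminus A}(x)$, $v=P_{A\setminus B}(x)$ and $t=\min_{n\in A}|\xx_n^*(x)|$, these give exactly $\Vert x-P_A(x)\Vert=\Vert y+u\Vert\le\Vert y+t\one_{\varepsilon(A\setminus B)}\Vert\le\Vert y+v\Vert=\Vert x-P_B(x)\Vert$. Your substitutes do not supply this: ``relocate and renormalize the largest coefficients onto fresh coordinates and pass to a limit'' does not say how non-unimodular coefficients enter Property (A), and your step (ii) routes the estimate through $1$-suppression quasi-greediness applied to $x-P_B(x)$, which points the wrong way --- discarding the large block gives $\Vert x-P_{A\cup B}(x)\Vert\le\Vert x-P_B(x)\Vert$, but $x-P_A(x)$ is obtained from $x-P_{A\cup B}(x)$ by \emph{adding} the block $u$, and that addition can only be controlled against the larger block $v$ via the two lemmas above, not against the empty set. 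The ``block-replacement inequality'' you invoke is therefore not a consequence of Property (A) plus $1$-super-democracy; it is the theorem. Finally, note that the convexity step is precisely what is unavailable when $p<1$, so the $p$-norm framing of your write-up would at best recover the Banach-space case; in a genuinely quasi-Banach setting this route needs a replacement for the averaging argument.
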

	This theorem shows that the Property (A) is an stronger condition than quasi-greediness. Hence, other open question in the field is whether Property (A) implies unconditionality, meaning whether Property (A) is strong enough to imply that the basis is greedy.
	
	Here, we discuss the isometric case about semi-greediness. In  \cite{BGG}, the authors present the following result and also analyze the improvement of the estimate obtained compared to the one proven in article \cite{BCH}.
	\begin{theorem}
		Let $\mathcal B$ be a bi-monotone Schauder basis in a $p$-Banach space. If $\mathcal B$ is $1$-semi-greedy, then the basis is $3^{2/p}$-almost-greedy.
	\end{theorem}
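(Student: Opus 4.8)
The plan is to prove the statement through the characterization of Theorem~\ref{thalmost}: a basis is almost-greedy if and only if it is quasi-greedy and democratic, so it suffices to show that a $1$-semi-greedy bi-monotone Schauder basis is democratic with constant $1$ and quasi-greedy with a constant small enough that the usual ``quasi-greedy $+$ democratic $\Rightarrow$ almost-greedy'' bookkeeping produces $C_{al}\le 3^{2/p}$. As in the proof of the $\SX_d$-characterization above, I would first reduce to finitely supported $x$ by density. The only hypothesis at our disposal, $1$-semi-greediness, delivers a Chebyshev greedy sum $\mathcal{CG}_n(x)\in\operatorname{span}\{\xx_i:i\in A\}$ with $\Vert x-\mathcal{CG}_n(x)\Vert\le\sigma_n(x)$; the recurring difficulty is to convert this into information about a genuine coordinate projection $P_A(x)$. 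The device throughout is to relocate the relevant coordinates into a single interval of $\mathbb N$ by adjoining a block of auxiliary coordinates placed far to the right, so that only projections onto intervals $[1,k]$ (or their complements) are ever used --- and those have norm $1$ exactly because the basis is bi-monotone.

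For democracy, fix finite sets $A,B$ with $|A|=|B|=m$, signs $\varepsilon\in\mathcal E_A$, $\eta\in\mathcal E_B$, and a block $D$ of $m$ consecutive indices lying to the right of $A\cup B$. Applying $1$-semi-greediness to $x:=\one_{\varepsilon A}+\one_{\eta D}$, the set $D$ is a greedy set of $x$, and since deleting either the $A$-coordinates or the $D$-coordinates of $x$ is admissible for $\sigma_m$, one has $\sigma_m(x)\le\min\{\Vert\one_{\varepsilon A}\Vert,\Vert\one_{\eta D}\Vert\}$. Any Chebyshev greedy sum on $D$ leaves a residual $\one_{\varepsilon A}+u$ with $\operatorname{supp}(u)\subseteq D$ to the right of $A$, so the interval projection $P_{[1,\max A]}$ recovers $\one_{\varepsilon A}$ without increasing the norm, giving $\Vert\one_{\varepsilon A}\Vert\le\Vert x-\mathcal{CG}_m(x)\Vert\le\sigma_m(x)\le\Vert\one_{\eta D}\Vert$; interchanging the roles of $A$ and $D$ and using the complementary interval projection $\operatorname{Id}-P_{[1,k]}$ gives the reverse inequality. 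Chaining through a fixed far-right block shows $\Vert\one_{\varepsilon A}\Vert$ depends only on $m$, so the basis is $1$-super-democratic.

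The quasi-greedy step is handled by the same relocation idea: given a greedy set $A$ of $x$ with $|A|=m$, I would adjoin an auxiliary far-right block, apply $1$-semi-greediness to a perturbed element, and use the bi-monotone interval projection to bound $\Vert x-P_A(x)\Vert$ by $\Vert x\Vert$ up to a bounded factor (a small power of $2$). One then estimates $\Vert x-P_A(x)\Vert$ by writing, for a near-optimal projection set $B$ padded to $|B|=m$, the identity $x-P_A(x)=(x-P_B(x))-P_{A\setminus B}(x)+P_{B\setminus A}(x)$; the $p$-triangle law splits this into three summands, where $P_{A\setminus B}(x)$ is controlled by quasi-greediness applied to $x-P_B(x)$ (for which $A\setminus B$ is a greedy set) and $P_{B\setminus A}(x)$ is controlled by $P_{A\setminus B}(x)$ through $1$-democracy and the standard truncation estimate for quasi-greedy bases. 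Two nested three-term applications of the $p$-triangle inequality --- each interval projection costing only the factor $1$, which is precisely where bi-monotonicity improves on the basis constant appearing in \cite{BCH} --- yield $C_{al}\le 3^{2/p}$. The hard part will be the quasi-greedy step: a coordinate projection onto a general finite greedy set is unbounded for a bare Schauder basis, so the argument must be engineered so that the non-interval projection $P_{A^c}$ never actually appears --- only $P_{[1,k]}$ and $\operatorname{Id}-P_{[1,k]}$ --- after which one lets the perturbation parameters tend to $0$ while tracking the constants.
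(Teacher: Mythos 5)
First, a caveat: the paper itself does not prove this theorem; it is imported from \cite{BGG} and stated without argument, so there is no in-text proof to measure your proposal against. Judged on its own terms, your proposal gets the easier half right and leaves the decisive half unproved. The democracy argument --- adjoin a far-right block $D$ with $\vert D\vert=\vert A\vert=m$, apply $1$-semi-greediness to $\one_{\varepsilon A}+\one_{\eta D}$ with either $A$ or $D$ chosen as the (tied) greedy set, and peel off the surviving block with the interval projections $P_{[1,\max A]}$ and $\mathrm{Id}-P_{[1,\max A]}$, which cost nothing by bi-monotonicity --- is correct and is indeed the standard device in this area. You only need to add a line for $\vert A\vert<\vert B\vert$: pad $A$ with a far-right block $D'$ of cardinality $\vert B\vert-\vert A\vert$, note $\Vert\one_{\varepsilon A}\Vert=\Vert P_{[1,\max A]}(\one_{\varepsilon A}+\one_{D'})\Vert\le\Vert\one_{\varepsilon A}+\one_{D'}\Vert$ by bi-monotonicity, and then invoke the equal-cardinality case.

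The genuine gap is the quasi-greedy step together with the constant. You correctly observe that $x-\mathcal{CG}_{\vert A\vert}(x)$ differs from $x-P_A(x)$ by an element supported on $A$, and that for a general greedy set $A$ no interval projection recovers $P_{A^c}(x)$; but you then only assert that ``the argument must be engineered so that $P_{A^c}$ never appears'' without supplying the engineering. That engineering is the entire content of the theorem, and it is where the known proofs that semi-greediness implies almost-greediness (\cite{DKK2003}, \cite{B2019}, \cite{BL}, \cite{BCH}) do their real work: they do not pass through quasi-greediness at all, but compare $x-P_A(x)$ directly with $x-P_B(x)$ by inserting an auxiliary block of coefficients at the threshold level $t=\min_{n\in A}\vert\xx_n^*(x)\vert$, applying semi-greediness to the modified element, and absorbing the leftover piece supported inside $\supp(x)$ by a second application of the same trick --- which is exactly where the two factors of $3^{1/p}$ arise. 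Your intended route, by contrast --- establish quasi-greediness with some constant $K$ and $1$-democracy, then invoke Theorem \ref{thalmost} --- cannot certify the bound $3^{2/p}$: the proof of that implication costs, on top of the two three-term $p$-triangle splittings you count, at least one factor of $K$ in $\Vert P_{A\setminus B}(x)\Vert\lesssim\Vert x-P_B(x)\Vert$ and further $K$-dependent factors in the truncation estimates needed to control $\Vert P_{B\setminus A}(x)\Vert$ by $t\Vert\one_{B\setminus A}\Vert$ and then by $\Vert P_{A\setminus B}(x)\Vert$. Unless you can prove $K=1$ exactly (which $1$-semi-greediness does not obviously yield, and which you do not attempt), this route produces $3^{2/p}$ multiplied by powers of $K$, not $3^{2/p}$. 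To close the argument you would need to replace the detour through the qualitative characterization by the direct quantitative comparison of $x-P_A(x)$ with $x-P_B(x)$.
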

	
	\begin{Remark}
		A basis is said to be bi-monotone in a quasi-Banach space $\mathbb X$ when
		$$\max\lbrace \Vert S_m(x)\Vert, \Vert x-S_m(x)\Vert\rbrace\leq \Vert x\Vert,\; \forall m\in\mathbb N, \forall x\in\SX.$$
	\end{Remark}
	
	Hence, here, the natural question is the following one.
	
	\textbf{Question 4.} Is it true that $1$-almost-greediness is equivalent to $1$-semi-greediness in the context of Banach (and quasi-Banach) spaces? 
	
	Another line of study in the isometric case of greedy-type bases is the existence of renormings, that is, if my basis is $C$-greedy (or another type of greedy-like basis), does there exist an equivalent norm such that the basis becomes 1-greedy? In general, a renorming $\| \cdot \|_0$ of $(\mathbb{X}, \| \cdot \|)$ has the form
	\[
	\|x\|_0 = \max \{a \|x\|, \|T(x)\|_\mathbb{Y}\}
	\]
	for some $0 < a < \infty$ and some bounded linear operator $T$ from $\mathbb{X}$ into a Banach space $\mathbb{Y}$.
	
	The problem with the renormings in Banach spaces is that, as we can see in the last equality, $T$ has to be linear and the greedy sums are not linear, so for that reason there is not a theory about renormings for greedy-like bases in Banach spaces. One paper where this idea is posed is \cite{DKOS}, where the authors proved that, for a fixed $\epsilon > 0$, it is possible to find a renorming in $L_p$, $1 < p < \infty$, such that the Haar system is $(1 + \epsilon)$-greedy, but it is open whether one can get the constant $1$.
	
	The situation change when we work with strictly quasi-Banach spaces as we can see in \cite{AABW}, where the authors proved the following lemma.
	
	\begin{lemma}[{\cite[Lemma 12.1]{AABW}}]
		Let $(\mathbb{X}, \| \cdot \|)$ be a quasi-Banach space. Assume that $\| \cdot \|_0: \mathbb{X} \to [0, +\infty)$ is such that, for every $t \in \mathbb{F}$ and for every $x \in \mathbb{X}$,
		\begin{itemize}
			\item $\|tx\|_0 = |t|\|x\|_0$,
			\item $\|x\|_0 \approx \|x\|$.
		\end{itemize}
		Then $\| \cdot \|_0$ is a renorming of $\| \cdot \|$.
	\end{lemma}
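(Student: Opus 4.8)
The plan is to verify the only non-trivial axiom among those defining a quasi-norm, namely the quasi-triangle inequality Q3), using nothing beyond the two hypotheses on $\|\cdot\|_0$: positive homogeneity and the two-sided equivalence $\|x\|_0 \approx \|x\|$. So suppose there are constants $0 < a_1 \le a_2 < \infty$ with $a_1 \|x\| \le \|x\|_0 \le a_2 \|x\|$ for all $x \in \SX$, and let $C \ge 1$ be the modulus of concavity of the original quasi-norm $\|\cdot\|$. First I would check the non-degeneracy axiom Q1): if $\|x\|_0 = 0$, then $a_1 \|x\| \le \|x\|_0 = 0$ forces $\|x\| = 0$, hence $x = 0$; conversely $\|0\|_0 = 0$ by the homogeneity hypothesis applied with $t = 0$. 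Axiom Q2) (absolute homogeneity) is exactly the first bulleted hypothesis. For Q3), given $x, y \in \SX$ I would chain the inequalities
\[
\|x + y\|_0 \le a_2 \|x + y\| \le a_2 C\bigl(\|x\| + \|y\|\bigr) \le \frac{a_2 C}{a_1}\bigl(\|x\|_0 + \|y\|_0\bigr),
\]
so that $\|\cdot\|_0$ satisfies Q3) with constant $C_0 := a_2 C / a_1 \ge 1$.

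Having established that $\|\cdot\|_0$ is a genuine quasi-norm, it remains only to observe that it induces the same topology and the same notion of completeness as $\|\cdot\|$; this is immediate from the equivalence $a_1\|x\| \le \|x\|_0 \le a_2\|x\|$, since the identity map $(\SX, \|\cdot\|) \to (\SX, \|\cdot\|_0)$ and its inverse are then both bounded, hence uniformly continuous, and in particular Cauchy sequences correspond and their limits coincide. Therefore $(\SX, \|\cdot\|_0)$ is complete, i.e.\ a quasi-Banach space, and $\|\cdot\|_0$ is an equivalent quasi-norm — a renorming of $\|\cdot\|$ in the required sense.

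I do not expect a genuine obstacle here: the statement is essentially a bookkeeping lemma whose whole content is that in the quasi-Banach category one is permitted to renorm by any positively homogeneous functional equivalent to the original quasi-norm, with no triangle-type condition imposed on the new functional itself — in sharp contrast to the Banach setting, where one must additionally arrange subadditivity. The one point deserving a word of care is that $\|\cdot\|_0$ is only assumed homogeneous and equivalent, so it is a priori not even known to be continuous; but continuity is not needed for the definition of a quasi-norm, and in any case it follows a posteriori from the equivalence together with continuity of $\|\cdot\|$ (which, as noted earlier in the excerpt via Aoki–Rolewicz, holds after passing to an equivalent $p$-norm). If one wishes, one can record that $\|\cdot\|_0^p$-type continuity is inherited likewise, but this is not required for the conclusion.
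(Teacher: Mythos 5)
Your proof is correct: verifying Q1) and Q2) from homogeneity and the lower bound, obtaining Q3) with constant $a_2 C/a_1$ by chaining the equivalence through the original quasi-triangle inequality, and noting that equivalent quasi-norms share Cauchy sequences and limits is exactly the standard argument. The paper itself states this lemma without proof, citing \cite{AABW}, and your argument is essentially the one given there, so there is nothing to add.
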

	
	This lemma allows us to have renormings of (strictly) quasi-Banach spaces based on non-linear operators. In fact, in \cite{AABW}, the authors proved that if $\mathcal B$ is $C$-quasi-greedy (resp. $C$-almost-greedy or $C$-greedy), there is a renorming such that the basis is $1$-quasi-greedy (resp. $1$-almost-greedy or $1$-greedy). In \cite{B}, the author did the same for partially-greedy bases. Thus, the open question here is the following one.
	
	\textbf{Question 5.} If $\mathcal B$ is $C$-semi-greedy in a quasi-Banach space, is there a renorming such that the basis is $1$-semi-greedy?\bigskip
	
	\textbf{Acknowledgments}: the author thanks the referees for their comments and also expresses gratitude to the Professor F. J. Fernández for his feedback.
	
	\textbf{Declarations}\bigskip
	
	\textbf{Ethics approval and consent to participate}\bigskip
	Not applicable.
	
	\textbf{Availability of data and material}\bigskip
	Not applicable.

\end{document}